\newtheorem{Theorem}{Theorem} 
\newtheorem{Lemma}[Theorem]{Lemma}
\numberwithin{equation}{section}
\renewcommand{\phi}{\varphi}
\newcommand{\C}{\operatorname{C}}
\newcommand{\N}{\operatorname{N}}
\newcommand{\Z}{\operatorname{Z}}
\newcommand{\Aut}{\operatorname{Aut}}
\newcommand{\Out}{\operatorname{Out}}
\newcommand{\GL}{\operatorname{GL}}
\newcommand{\SL}{\operatorname{SL}}
\newcommand{\Irr}{\operatorname{Irr}}
\newcommand{\IBr}{\operatorname{IBr}}
\newcommand{\Syl}{\operatorname{Syl}}
\newcommand{\Ker}{\operatorname{Ker}}
\newcommand{\CF}{\operatorname{CF}}
\mathchardef\ordinarycolon\mathcode`\:  
\title{Broué's isotypy conjecture for the sporadic groups and their covers and automorphism groups}
\author{Benjamin Sambale\footnote{Institut für Mathematik, Friedrich-Schiller-Universität, 07743 Jena, Germany, 
\href{mailto:benjamin.sambale@uni-jena.de}{benjamin.sambale@uni-jena.de}}}
\date{\today}
\begin{document}
\frenchspacing
\maketitle
\begin{abstract}\noindent
Let $B$ be a $p$-block of a finite group $G$ with abelian defect group $D$ such that $S\unlhd G$, $S'=S$, $G/\Z(S)\le\Aut(S)$ and $S/\Z(S)$ is a sporadic simple group. We show that $B$ is isotypic to its Brauer correspondent in $\N_G(D)$ in the sense of Broué. This has been done by [Rouquier, 1994] for principal blocks and it remains to deal with the non-principal blocks. 
\end{abstract}

\textbf{Keywords:} Broué's Abelian Defect Group Conjecture, isotypies, sporadic groups\\
\textbf{AMS classification:} 20D08, 20C15 

\section{Introduction}
Let $B$ be a $p$-block of a finite group $G$ with abelian defect group $D$. Broué's Abelian Defect Group Conjecture asserts that $B$ is derived equivalent to its Brauer correspondent in $\N_G(D)$. This has been verified in only very few special situations. For example, the conjecture holds if $D$ is cyclic or $|D|\le 8$ (see \cite{Linckelmanncyclic,Rickardcyclic,Rouquiercyclic,LinckelmannC2C2,KKL,EatonE8}). In case $p=2$ or $|D|=9$, the conjecture is known to hold for principal blocks (see \cite{FongHarris,CravenRouquier,KK}). Moreover, Broué's Conjecture holds for $p$-solvable groups $G$ and some blocks of sporadic groups $G$ (see \cite{HarrisLinckelmann,KoshitaniNonPrin,KMN2,KMN3,KMHN,MuellerSchaps,KKW,KKW2,HollowayJ2}). 

The existence of a derived equivalence implies that there is a perfect isometry between $B$ and its Brauer correspondent. Since perfect isometries only concern the characters of $B$, it is usually much easier to show directly that there exists such a perfect isometry. In fact, one can often find an isotypy, i.\,e. a perfect isometry with additional local compatibility relations.
This indicates the presence of a splendid derived equivalence which is a strong form of a derived equivalence.
Rouquier~\cite{RouquierPerfect} showed that the isotypy version of Broué's Conjecture holds for all principal blocks of sporadic groups and their automorphism groups. The aim of this paper is to extend this result to all blocks of sporadic groups.

\begin{Theorem}\label{main}
Let $B$ be a block of a finite group $G$ with abelian defect group $D$ such that $S\unlhd G$, $S'=S$, $G/\Z(S)\le\Aut(S)$ and $S/\Z(S)$ is a sporadic simple group. Then $B$ is isotypic to its Brauer correspondent in $\N_G(D)$.
\end{Theorem}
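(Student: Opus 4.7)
The strategy is to handle the non-principal blocks one by one using the classification of sporadic groups. Since Rouquier's theorem \cite{RouquierPerfect} already covers the principal block, I restrict to non-principal $B$. Using standard Clifford-theoretic reductions (Fong--Reynolds, the theory of covered blocks, reduction modulo $\Z(G)$), the problem reduces to understanding blocks of the quasi-simple covers $S$ of sporadic groups and of $S$ extended by outer automorphisms. I would then use the ATLAS of finite groups together with GAP's character table library to enumerate, for each sporadic group $S/\Z(S)$ and each prime $p\mid|S|$, the non-principal $p$-blocks with abelian defect group $D$. For most primes $p$ the Sylow $p$-subgroup of $S$ is already cyclic, so all blocks have cyclic defect; the interesting cases occur only at the small primes $2,3,5,7$ and, for some of the larger groups, a few additional primes.

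For each non-principal block on the resulting list, the plan is to establish isotypy by matching it to a situation already settled in the literature. The main buckets are: blocks of cyclic defect, where isotypy follows from \cite{Linckelmanncyclic,Rickardcyclic,Rouquiercyclic}; blocks with $|D|\le 8$ or $|D|=9$ \cite{KKL,EatonE8,KK}; blocks with Klein four defect group \cite{LinckelmannC2C2,CravenRouquier}; nilpotent blocks, which by Puig's theorem are Morita equivalent to the group algebra of $D$ via a splendid equivalence and thus isotypic to their Brauer correspondent; and the non-principal sporadic blocks already treated individually in \cite{HarrisLinckelmann,KoshitaniNonPrin,KMN2,KMN3,KMHN,MuellerSchaps,KKW,KKW2,HollowayJ2}.

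For the blocks not covered by these reductions I would construct the isotypy by hand. This amounts to (i) identifying the Brauer correspondent $b$ of $B$ in $\N_G(D)$ via Brauer's First Main Theorem, (ii) exhibiting a signed bijection $\Irr(B)\to\Irr(b)$ which induces a perfect isometry, and (iii) verifying for each $u\in D$ that the generalised decomposition maps of $B$ and $b$ at $u$ match under this bijection. All of the required character-theoretic data (ordinary and Brauer character tables, decomposition matrices, subsection information) are available in GAP, and the extension to $G/S\le\Out(S)$ is usually forced by Galois theory and the known action of $\Out(S)$ on $\Irr(S)$.

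The main obstacle will be twofold. First, the organisational task of checking every block of every cover and every automorphism extension of every sporadic group is substantial, and for the largest groups (the Fischer groups, $\text{HN}$, $\text{Th}$, $\text{B}$, $\text{M}$) even the $p$-local structure is not always fully accessible in GAP. Second, and more seriously, when $p$ divides the order of the Schur multiplier (as for $6{\cdot}\text{Suz}$, $12{\cdot}\text{M}_{22}$, $6{\cdot}\text{Fi}_{22}$, and others) the defect group may contain a nontrivial piece of $\Z(S)$; here both the enumeration of blocks and the choice of perfect isometry have to be made compatibly with the action of $\Z(S)$, and the space of candidate bijections becomes larger. I expect these mixed-characteristic covers, together with the residual blocks of the very largest sporadic groups, to be where the most technical work is needed.
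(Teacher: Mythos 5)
Your initial reductions (Rouquier for principal blocks, Fong--Reynolds, factoring out $\Ker(B)$, restricting to the small primes) match the paper's Step~1, and the paper does confirm that these reductions bring the problem down to finitely many non-principal faithful blocks at $p\in\{3,5,7\}$ with $|D|=p^2$. But there is a genuine gap at the heart of your plan, namely item (iii): you assume that ``all of the required character-theoretic data (ordinary and Brauer character tables, decomposition matrices, subsection information) are available in GAP'' and that one can simply ``verify for each $u\in D$ that the generalised decomposition maps of $B$ and $b$ at $u$ match.'' This direct computation is precisely what is \emph{not} available: for the large sporadic groups the class fusion from $\C_G(u)$ into $G$ cannot be computed, and therefore neither can the generalized decomposition numbers $d^u_{\chi\phi}$. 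The paper explicitly flags this as the central obstacle (``The direct computation of $Q_u$ is usually not practical, since the class fusion from $\C_G(u)$ to $G$ is not available'') and its principal contribution is a workaround: one determines the generalized decomposition matrices $Q_u$ only \emph{up to basic sets and a common signed permutation} by solving $X^\text{T}X=C_u$ with Plesken's algorithm, constrains the solutions with contribution matrices, the Brou\'e--Puig $*$-construction, and $I(B)$-stable generalized characters, and then invokes a theorem of Horimoto--Watanabe to conclude the existence of a perfect isometry from this uniqueness, followed by a separate argument (via the Brauer--Dade theory of cyclic-defect blocks, Watanabe's hyperfocal results, and careful control of $p$-rationality) to upgrade to an isotypy. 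None of these tools appears in your outline.

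Two smaller points. First, your hope that most cases will already be ``settled in the literature'' is misplaced: the paper notes that Rouquier confirmed this theorem had not been proven before, and the only individual case directly imported from the literature is $2.J_2$ (Holloway). The cyclic-defect, $|D|\le 8$, and $|D|=9$ reductions are real, but after them one is left with genuinely new cases for $p=5$ and $p=7$ (various covers of $Suz$, the Fischer groups, $Co_1$, $BM$, $M$) that must be treated by the combinatorial method above, not by citation. Second, your worry about mixed-characteristic covers (e.g.\ $6.Suz$, $6.Fi_{22}$) is a bit of a red herring: since $\Ker(B)$ is always a $p'$-group, the reduction modulo $\Ker(B)$ already removes the coprime-to-$p$ part of the centre, and the remaining part is absorbed into the defect group and handled uniformly; the paper does not encounter any special difficulty here.
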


Raphaël Rouquier informed the author that this theorem has not been proven so far.

We will make use of the following notation. 
We consider a $p$-block $B$ of a finite group $G$ with respect to a $p$-modular system $(K,\mathcal{O},F)$ where $\mathcal{O}$ is a complete discrete valuation ring with quotient field $K$ of characteristic $0$ and field of fractions $F$ of characteristic $p$. As usual, we assume that $K$ is a splitting field for $G$ and $F$ is algebraically closed. 
The set of irreducible (Brauer) characters of $B$ is denoted by $\Irr(B)$ (resp. $\IBr(B)$). Moreover, $k(B):=\lvert\Irr(B)\rvert$ and $l(B):=\lvert\IBr(B)\rvert$.
Let $D\le G$ be a defect group of $B$, and let $b_D$ be a Brauer correspondent of $B$ in $\C_G(D)$. Then $\N_G(D,b_D)$ is the inertial group of $b_D$ in $\N_G(D)$ and $I(B):=\N_G(D,b_D)/D\C_G(D)$ is called the \emph{inertial quotient}. A ($B$-)\emph{subsection} is a pair $(u,b_u)$ such that $u\in D$ and $b_u$ is a Brauer correspondent of $B$ in $\C_G(u)$. The corresponding generalized decomposition matrix is denoted by $Q_u\in\mathbb{C}^{k(B)\times l(b_u)}$. In particular, the ordinary decomposition matrix of $B$ is given by $Q_1$. The Cartan matrix $C_u$ of $b_u$ coincides with $Q_u^\text{T}\overline{Q_u}$ where $Q_u^\text{T}$ denotes the transpose of $Q_u$ and $\overline{Q_u}$ is the complex conjugate of $Q_u$. 
A \emph{basic set} for $b_u$ is a basis for the $\mathbb{Z}$-module of generalized characters on the $p$-regular elements of $\C_G(u)$ spanned by $\IBr(b_u)$. Under a change of basic sets the matrix $Q_u$ transforms into $Q_uS$ for some $S\in\GL(l(b_u),\mathbb{Z})$. Accordingly, the Cartan matrix becomes $S^\text{T}C_uS$. In particular, the elementary divisors of $C_u$ do not depend on the chosen basic set. Brauer also introduced the \emph{contribution matrix} \[M_u:=(m_{\chi\psi}^u)_{\chi,\psi\in\Irr(B)}=\overline{Q_u}C_u^{-1}Q_u^\text{T}\in\mathbb{C}^{k(B)\times k(B)}\] 
with respect to $(u,b_u)$. It is easy to see that $M_u$ does not depend on the basic set, but on the order of $\Irr(B)$. 

We denote the commutator subgroup of $G$ by $G'$. Moreover, an abstract cyclic group of order $n$ is denoted by $Z_n$.

\section{The general method}
Let $G$, $B$ and $D$ as in Theorem~\ref{main}. Let $b$ be the Brauer correspondent of $B$ in $\N_G(D)$.
The idea of the proof of Theorem~\ref{main} is to show first that the matrix $Q_1$ is essentially the same for $B$ and $b$. After that, we show that the matrices $Q_u$ are essentially uniquely determined by $Q_1$ (see below for details). Then the existence of a perfect isometry follows from a result by Horimoto-Watanabe~\cite{WatanabePerfIso}. In order to show that the perfect isometry is also an isotypy, we use the Brauer-Dade theory of blocks with defect group of order $p$. Altogether we will not use much information about $G$ beyond its character table coming from the ATLAS~\cite{Atlas}. Instead, we are concerned mostly with combinatorial computations with GAP~\cite{GAP47}.

\subsection{Step 1: Collect local data}
As mentioned in the introduction, we may always assume that $D$ is non-cyclic.
Now suppose that $B$ is a so-called non-faithful block, i.\,e. 
\[\Ker(B):=\bigcap_{\chi\in\Irr(B)}{\Ker(\chi)}\ne 1\]
(then necessarily $\Z(G)\ne1$). 
Since $\Ker(B)$ is a $p'$-group, $B$ dominates a unique block $\overline{B}$ of $G/\Ker(B)$ with defect group $D\Ker(B)/\Ker(B)\cong D$ and the “same” irreducible characters (see \cite[Theorem~5.8.8]{Nagao}). 
Let $\overline{b}$ be a Brauer correspondent of $\overline{B}$ in $\N_{G/\Ker(B)}(D\Ker(B)/\Ker(B))=\N_G(D)/\Ker(B)$. Then it is easy to show that $b$ dominates $\overline{b}$. Hence, also $b$ and $\overline{b}$ have the “same” irreducible characters and the same defect group. 
Moreover, it is clear that $I(B)\cong I(\overline{B})$. Since our method only relies on the irreducible characters and on the inertial quotient, it will become obvious that we only need to construct an isotypy between $\overline{B}$ and $\overline{b}$.
Therefore, we may restrict ourselves to the faithful blocks of $G$.

Now we need to discuss the case $G'<G$. Here the ATLAS~\cite{Atlas} shows that $|G/G'|=2$. It is well-known that $B$ covers a block $\widetilde{B}$ of $G'$ with defect group $D\cap G'$. Moreover, it follows from \cite[Lemma~5.5.7]{Nagao} that $\widetilde{B}$ is faithful. 
If $\widetilde{B}$ is not stable in $G$, then $B$ and $\widetilde{B}$ are Morita equivalent by the Fong-Reynolds Theorem (even the generalized decomposition numbers are equal by \cite{WatanabeFong}). 
By the transitivity of the Brauer correspondence, the same is true for $b$.
If on the other hand all irreducible characters in $\widetilde{B}$ are stable, then $B$ and $\widetilde{B}$ are even isomorphic as algebras (see \cite[Théorème~0.1]{Broueiso}). We will make use of these facts later. 
In case $p>2$ we have $D\cap G'=D$ and $\widetilde{B}$ occurs on Noeske's list~\cite{Noeske}.
We will add these blocks $B$ to the list (this will be only relevant for $p=5$, see Table~\ref{tab} below). Now let $p=2$. If $D\cap G'$ is cyclic, then $\widetilde{B}$ is nilpotent. By the Külshammer-Puig Theorem~\cite{exnilblocks}, also $B$ is nilpotent and we are done (see \cite{BrouePuig}). Hence, we may assume that $D\cap G'$ is non-cyclic. Then, $\widetilde{B}$ occurs on Noeske's list or $\widetilde{B}$ is principal. The latter case is impossible, since $J_1$ is the only sporadic group with an abelian Sylow $2$-subgroup, but $\Out(J_1)=1$. Moreover, $\Out(Co_3)=1$ and it follows that $|D|\le 8$. Hence, we are done by \cite{KKL}.

Now with the extension of Noeske's list in hand, we can collect local data. 
By \cite{RouquierPerfect}, we may assume that $B$ is not the principal block (observe that the principal block of a proper covering group is always non-faithful). 
It follows that we may assume that $p\in\{2,3,5,7\}$. By \cite{KKL}, we may assume further that $p\in\{3,5,7\}$. 
Then it turns out that $|D|=p^2$ (we are lucky here that the principal $3$-block of $O'N$ is covered only by the principal block of $O'N.2$).
The numbers $k(B)$ and $l(B)$ for the groups not listed in \cite{Noeske} can be determined easily in GAP.
By an old result by Brauer, all characters in $\Irr(B)$ have height $0$ (see \cite[Theorem~IV.4.18]{Feit}). 
In a series of papers, An and his coauthors proved that Alperin's Weight Conjecture holds for $B$ (see \cite{AnWilson} for the latest of these papers). Strictly speaking, these papers do not explicitly address the proper automorphism groups, but since we only encounter $5$-blocks with maximal defect for these groups (see below), Alperin's Conjecture can be checked with GAP easily. 
Since $D$ is abelian, this is equivalent to $l(B)=l(b)$. Since $k(B)-l(B)$ is locally determined (see below), we also have $k(B)=k(b)$.

\subsection{Step~2: Compute the decomposition and Cartan matrices of $B$ up to basic sets}
The character table of $G$ is given in the ATLAS~\cite{Atlas}. Let $A$ be the submatrix of the character table of $G$ consisting of the rows corresponding to characters in $\Irr(B)$ and the columns of $p$-singular elements. Since the character values lie in a suitable cyclotomic field, the corresponding Galois group acts on the columns of $A$. Since the entries of $A$ are algebraic integers, we can replace $A$ by an integral matrix $\widetilde{A}$ of the same shape such that $xA=0$ if and only if $x\widetilde{A}=0$ for every $x\in\mathbb{C}^{k(B)}$. After transforming $\widetilde{A}$ into its Smith normal form, we easily choose a basis for the $\mathbb{Z}$-module $\{x\in\mathbb{Z}^{k(B)}:x\widetilde{A}=0\}$. We write the basis vectors as columns of a matrix $\widetilde{Q}_1$. Then $\widetilde{Q}_1$ spans the space of generalized projective characters of $B$. In particular, there exists a matrix $S\in\GL(l(B),\mathbb{Z})$ such that $Q_1=\widetilde{Q}_1S$, i.\,e. $Q_1$ and $\widetilde{Q}_1$ coincide up to basic sets. In this way we also obtain $C_1$ up to basic sets and its elementary divisors.

\subsection{Step~3: Determine $I(B)$}
A major problem is the determination of $I(B)$ and its action on $D$. We may regard $I(B)$ as a $p'$-subgroup of $\GL(2,p)$. 
Let $\mathcal{R}$ be a set of representatives for the $I(B)$-conjugacy classes of $D\setminus\{1\}$. Then it is well-known that 
\[k(B)-l(B)=\sum_{u\in\mathcal{R}}{l(b_u)}.\]
Since we already know $k(B)-l(B)$, this gives an upper bound for $\lvert\mathcal{R}\rvert$. We have $I(b_u)\cong\C_{I(B)}(u)$. Moreover, $b_u$ dominates a block $\overline{b_u}$ of $\C_G(u)/\langle u\rangle$ with cyclic defect group $D/\langle u\rangle$ of order $p$ and $I(\overline{b_u})\cong I(b_u)$. By Dade's theory of blocks with cyclic defect groups (this special case was actually known to Brauer), we obtain $l(b_u)=l(\overline{b_u})=\lvert I(\overline{b_u})\rvert=\lvert I(b_u)\rvert=\lvert\C_{I(B)}(u)\rvert$. Hence, we have
\begin{align*}
k(B)-l(B)&=\sum_{u\in\mathcal{R}}{\lvert\C_{I(B)}(u)\rvert},\\
p^2-1&=\sum_{u\in\mathcal{R}}{\lvert I(B):\C_{I(B)}(u)\rvert}.
\end{align*}
Let $n_p$ be the multiplicity of $p$ as an elementary divisor of the Cartan matrix $C_1$. Then the theory of lower defect groups (see \cite[Section~1.8]{habil}) shows that
\[n_p\le \sum_{u\in\mathcal{R}}{\lvert\C_{I(B)}(u)\rvert}-\lvert\mathcal{R}\rvert=k(B)-l(B)-\lvert\mathcal{R}\rvert.\]
These equations give strong restrictions on the possible $p'$-subgroups $I(B)\le\GL(2,p)$. 
Moreover, An and Dietrich~\cite{AnDietrichSpor} determined all radical subgroups $R$ of sporadic groups and their normalizers. Since $D$ is radical, $I(B)$ occurs as a subgroup of some $\N_G(R)/\C_G(R)$. In case $p=3$ we can also compare $k(B)$ and $l(B)$ with the results in Kiyota~\cite{Kiyota} and Watanabe~\cite{WatanabeSD16}. Finally, some (but not all) inertial quotients are listed in \cite{Schaps}.

\subsection{Step~4: Determine the Morita equivalence class of $b$}
By a result of Külshammer~\cite{Kuelshammer}, $b$ is Morita equivalent to a twisted group algebra of the form $\mathcal{O}_\gamma[D\rtimes I(B)]$. After we have computed the Schur multiplier of $I(B)$, it turns out that there are only very few choices for the $2$-cocycle $\gamma$. If $\gamma$ is trivial, then $b$ is Morita equivalent to the group algebra of $D\rtimes I(B)$. In the non-trivial case, one can replace the inconvenient twisted group algebra by a group algebra of type $D\rtimes \widehat{I(B)}$ where $\widehat{I(B)}$ is a suitable covering group of $I(B)$ (see e.\,g. \cite[Proposition~1.20]{habil}). Then $b$ is Morita equivalent to a non-principal block of this group. Doing so, we can check if $k(b)$ and $l(b)$ coincide with the values already determined in Step~1. As a rule of thumb, in the twisted case, $l(b)$ is usually smaller (however, this is not generally true). 
In this way we identify $\gamma$ and the Morita equivalence class of $b$ uniquely. 
In our examples discussed below, we will see that $\gamma$ is always trivial.
However, we remark that a Morita equivalence does not necessarily induce an isotypy. 
Nevertheless, the ordinary decomposition matrix and the Cartan matrix of $b$ can be obtained easily (see \cite{BroueEqui}). 
From this we will find that the rows of the ordinary decomposition matrix of $b$ are almost always pairwise distinct (the exception is $2.Suz.2$ for which we need a special treatment). It follows that the characters in $\Irr(b)$ are all $p$-rational.

\subsection{Step~5: Determine the generalized decomposition matrix of $B$}
Using the character table of $G$, one can decide if $\Irr(B)$ contains $p$-conjugate characters. In our cases handled below, it turns out that almost always all irreducible characters of $B$ are $p$-rational (again the exception is $2.Suz.2$). It follows that the matrices $Q_u$ are integral for every $u\in\mathcal{R}$. 
The direct computation of $Q_u$ is usually not practical, since the class fusion from $\C_G(u)$ to $G$ is not available. Thus, we need to find a work around.

By the Brauer-Dade theory already used in Step~3, we have 
\begin{equation}\label{Cu}
C_u=p\biggl(\frac{p-1}{e_u}+\delta_{ij}\biggr)_{i,j=1}^{e_u}
\end{equation}
up to basic sets where $e_u:=\lvert\C_{I(B)}(u)\rvert$.
Since we already know $Q_1$, we can consider the $\mathbb{Z}$-module
$\mathcal{M}:=\{x\in\mathbb{Z}^{k(B)}:Q_1^\text{T}x=0\}$. By the orthogonality relations, the columns of $Q_u$ lie in $\mathcal{M}$. We will often show that $I(B)$ has only one orbit on $D\setminus\{1\}$, i.\,e. there is only one matrix $Q_u$ with $u\in\mathcal{R}$. In this case, let $\widetilde{Q}_u$ be a matrix whose columns form a basis of $\mathcal{M}$. It will turn out that $\det(\widetilde{Q}_u^\text{T}\widetilde{Q}_u)=\det C_u$. Then $Q_u$ is given by $\widetilde{Q}_u$ up to basic sets.

Hence, it remains to consider the case $\lvert\mathcal{R}\rvert\ge 2$. 
This part forms the core of the algorithm. We may assume that $C_u$ is given as above. An algorithm by Plesken~\cite{Plesken} aims to find all integral solutions $X\in\mathbb{Z}^{k(B)\times l(b_u)}$ of the equation $X^\text{T}X=C_u$. Of course, $Q_u$ is one of these solutions. The algorithm starts by computing the possible rows $r$ of $X$. These are (finitely many) integral vectors $0\ne r\in\mathbb{Z}^{l(b_u)}$ such that $m_r:=r^\text{T}C_u^{-1}r\le 1$. In our case, $m_r$ must be a diagonal entry of the contribution matrix $M_u$. Since all irreducible characters of $B$ have height $0$, a result by Brauer implies that $p^2m_r$ is an integer not divisible by $p$ (see \cite[Proposition 1.37]{habil}). Most likely, there are still too many possible rows in order to find all solutions $X$. We will use more theory in order to reduce the number of rows $r$ further. 

Let $\lambda$ be an $I(B)$-stable generalized character of $D$, and let $\chi\in\Irr(B)$. Then by Broué-Puig~\cite{BrouePuigA}, $\chi\mathbin{\ast}\lambda$ is a generalized character of $B$. In particular, the scalar product $(\chi\mathbin{\ast}\lambda,\psi)_G$ for $\psi\in\Irr(B)$ is an integer. This means that $\sum_{u\in\mathcal{R}\cup\{1\}}{\lambda(u)m_{\chi\psi}^u}\in\mathbb{Z}$. Running through all $\chi,\psi\in\Irr(B)$ we get
\[\sum_{u\in\mathcal{R}\cup\{1\}}{\lambda(u)M_u}\in\mathbb{Z}^{k(B)\times k(B)}.\]
Hence, the matrices $Q_u$ are not independent of each other.
Choosing the trivial character $\lambda=1_D$ we obtain the slightly stronger relation 
$\sum_{u\in\mathcal{R}\cup\{1\}}{M_u}=1_{k(B)}$
which was already known to Brauer (see \cite[Lemma~V.9.3]{Feit}). Since we know $M_1$, this gives a better upper bound for $m_r$.
On the other hand, if $\lambda$ is the regular character of $D$, the sum becomes $p^2M_1\in\mathbb{Z}$. This is, however, clear by the definition of $M_1$. In order to construct the $I(B)$-stable generalized characters systematically, we introduce a lemma which works in a more general situation.

\begin{Lemma}\label{stabchar}
Let $B$ be a block of a finite group with defect group $D$ and fusion system $\mathcal{F}$. Then the number of $\mathcal{F}$-conjugacy classes of $D$ coincides with the rank of the $\mathbb{Z}$-module of $\mathcal{F}$-stable generalized characters of $D$.
\end{Lemma}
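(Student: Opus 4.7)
The strategy is to prove matching upper and lower bounds on the rank of $L_\mathcal{F}$, where $L_\mathcal{F}$ denotes the $\mathbb{Z}$-module of $\mathcal{F}$-stable generalized characters of $D$.

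For the upper bound: a class function on $D$ is $\mathcal{F}$-stable if and only if it is constant on each $\mathcal{F}$-conjugacy class. Hence the $\mathbb{C}$-space $V_\mathcal{F}$ of $\mathcal{F}$-stable class functions has $\mathbb{C}$-dimension equal to the number $n$ of $\mathcal{F}$-conjugacy classes of $D$, with basis given by their indicator functions. Since $L_\mathcal{F}=\mathbb{Z}\Irr(D)\cap V_\mathcal{F}$ is a subgroup of the finitely generated free abelian group $\mathbb{Z}\Irr(D)$, it is free, and the natural inclusion $L_\mathcal{F}\otimes_\mathbb{Z}\mathbb{Q}\hookrightarrow V_\mathcal{F}$ forces $\rk(L_\mathcal{F})\le n$.

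The essential point is the matching lower bound, which I would establish first in the case $D$ abelian (the only one actually invoked in this paper). For such $D$ every subgroup satisfies $\N_D(Q)=D$ and is therefore fully $\mathcal{F}$-normalized, so by the receptivity axiom of a saturated fusion system each $\mathcal{F}$-morphism between subgroups of $D$ extends to an element of $W:=\Aut_\mathcal{F}(D)\cong I(B)$. Consequently $\mathcal{F}$-conjugacy on $D$ coincides with $W$-conjugacy, and an element of $\mathbb{Z}\Irr(D)$ is $\mathcal{F}$-stable precisely when it is $W$-invariant under the natural action of $W$ on $\Irr(D)$. The $W$-invariant sublattice of $\mathbb{Z}\Irr(D)$ is freely generated by the $W$-orbit sums on $\Irr(D)$, so $\rk(L_\mathcal{F})$ equals the number of $W$-orbits on $\Irr(D)$. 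By Brauer's permutation lemma this equals the number of $W$-orbits on $D$, which is $n$, yielding the matching inequality $\rk(L_\mathcal{F})\ge n$.

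The main obstacle in the general, possibly non-abelian setting — formally covered by the statement but not required later — is that $\mathcal{F}$ can contain isomorphisms between proper subgroups of $D$ which are not restrictions of elements of $\Aut_\mathcal{F}(D)$, so the single orbit-sum argument above does not suffice. In that generality one would invoke Alperin's fusion theorem to generate $\mathcal{F}$ from the automorphism groups $\Aut_\mathcal{F}(Q)$ for $Q\le D$ either $\mathcal{F}$-essential or equal to $D$, and combine the Brauer-type permutation-lemma contributions across these subgroups. Since all subsequent applications in the paper have $D$ abelian, the argument in the previous paragraph is sufficient for our purposes.
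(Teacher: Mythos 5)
Your argument for the abelian case is correct, but it is a genuinely different route from the paper's, and the paper's proof does cover the non-abelian case that you set aside. The paper proceeds entirely by integral linear algebra: it forms the matrix $A$ whose rows record the differences $\lambda_j(u_i)-\lambda_j(v)$ over pairs $u_i,v$ lying in the same $\mathcal{F}$-class, observes that its complex kernel is exactly the $t$-dimensional space of $\mathcal{F}$-stable class functions, and then uses the fact that the entries of $A$ are cyclotomic integers on which the Galois group acts through column permutations. This lets one replace $A$ by an integer matrix $\widetilde{A}$ of the same $\mathbb{Q}$-rank $s-t$ and with the same integral kernel; a Smith normal form computation then exhibits $t$ independent integral kernel vectors, i.e.\ $t$ independent $\mathcal{F}$-stable generalized characters. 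That argument is agnostic to whether $D$ is abelian and makes no appeal to $\Aut_\mathcal{F}(D)$ controlling fusion.

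Your approach instead observes, correctly, that for abelian $D$ the extension axiom of a saturated fusion system forces $\mathcal{F}$-conjugacy on elements to coincide with $W$-conjugacy for $W=\Aut_\mathcal{F}(D)\cong I(B)$, so the $\mathcal{F}$-stable sublattice of $\mathbb{Z}\Irr(D)$ is exactly the $W$-invariant sublattice, which is free on the orbit sums; Brauer's permutation lemma then matches the number of $W$-orbits on $\Irr(D)$ with the number on $D$. This is conceptually cleaner and more structural than the paper's matrix computation, and it is sufficient for every application in this paper (all defect groups encountered are abelian). What it does not do is prove the lemma as stated: when $D$ is non-abelian, $\mathcal{F}$-conjugation on $D$ is not induced by a single group of automorphisms of $D$, so there is no $W$ to which Brauer's permutation lemma could be applied, and you would indeed have to aggregate contributions over essential subgroups via Alperin's fusion theorem — a non-trivial piece of work that you sketch but do not carry out. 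You flag this gap honestly, but be aware that the paper's own proof already closes it by a more elementary (if less illuminating) route, so if you want to replace the paper's argument you would need to complete the non-abelian case or explicitly weaken the statement of the lemma.
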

\begin{proof}
Let $\mathcal{S}$ be a set of representatives for the conjugacy classes of $D$, and let $\Irr(D)=\{\lambda_1,\ldots,\lambda_s\}$. Every $\mathcal{F}$-conjugacy class of $D$ is a union of $D$-conjugacy classes. This gives a partition $\mathcal{S}=\bigcup_{i=1}^t{\mathcal{C}_i}$. 
In case $t=s$ every generalized character of $D$ is $\mathcal{F}$-stable and the result follows. Hence, we may assume $t<s$ in the following.
Choose $u_i\in\mathcal{C}_i$ for $i=1,\ldots,t$. Let $A$ be the matrix consisting of the rows of the form \[(\lambda_j(u_i)-\lambda_j(v):j=1,\ldots,s)\] 
where $v\in\mathcal{C}_i\setminus\{u_i\}$. Then $A\in\mathbb{C}^{(s-t)\times s}$. It is well-known that the values of the irreducible characters of $D$ are cyclotomic integers. Moreover, the Galois group of the corresponding cyclotomic field acts on the columns of $A$. Thus, after choosing a basis for the ring of cyclotomic integers, we may replace $A$ by $\widetilde{A}\in\mathbb{Z}^{(s-t)\times t}$ such that $Ax=0$ if and only if $\widetilde{A}x=0$ for every $x\in\mathbb{Z}^s$. By transforming $\widetilde{A}$ into its Smith normal form, we find a set of $t$ linearly independent vectors $x_1,\ldots,x_t\in\mathbb{Z}^s$ such that $Ax_i=0$ for $i=1,\ldots,t$. For $x_i=(y_1,\ldots,y_s)$ the generalized character $\sum_{j=1}^s{y_j\lambda_j}$ is $\mathcal{F}$-stable. Hence, the rank of the $\mathbb{Z}$-module of $\mathcal{F}$-stable generalized characters of $D$ is at least $t$. On the other hand, the dimension of the space of all $\mathcal{F}$-stable class function on $D$ is obviously $t$. This proves the claim.
\end{proof}

Coming back to our situation where $I(B)$ has at least two orbits on $D\setminus\{1\}$, it follows from Lemma~\ref{stabchar} that there is at least one more “interesting” $I(B)$-stable generalized character on $D$ (apart from the trivial character and the regular character). This greatly reduces the number of possible rows in Plesken's algorithm. 

If Plesken's algorithm yields two different solutions $X_1$ and $X_2$, it may happen that these solutions would coincide up to basic sets (viewed as $Q_u$). It is not obvious to decide when this happens, because the order and signs of the rows of $X_i$ may vary. However, one can proceed as follows. First we make sure that $X_i$ has an integral left inverse. This is often guaranteed, since $X_i$ will contain an identity matrix of size $l(b_u)\times l(b_u)$. Next, we construct $Y_i:=p^2 X_i(X_i^\text{T}X_i)^{-1} X_i^\text{T}$ and check if there is a signed permutation matrix $P$ such that $PY_1P^\text{T}=Y_2$. If this is not the case, then clearly, $X_1$ and $X_2$ are essentially different (i.\,e. not equal up to basic sets). Now assume that $P$ exists. By construction, $X_2$ lies in the eigenspace of $p^2Y_2$ with respect to the eigenvalue $p^2$. If $E_2$ is a basis of the corresponding integral eigenspace, then there exists a square matrix $S_2$ such that $X_2=E_2S_2$. Since $X_2$ has an integral left inverse, we see that also $X_2$ is a basis for the integral eigenspace. Similarly, $PX_1$ is a basis for the integral eigenspace of $Y_2$. Hence, there is a matrix $S\in\GL(l(b_u),\mathbb{Z})$ such that $PX_1S=X_2$, i.\,e. $X_1$ and $X_2$ coincide up to basic sets. The search for the matrix $P$ can be implemented as a backtracking algorithm going through the rows of $Y_i$. 

\subsection{Step~6: Construct perfect isometries}
For the existence of a perfect isometry between $B$ and $b$ we use \cite[Theorem~2]{WatanabePerfIso}. By Step~4, we already know the ordinary decomposition matrix of $b$ up to basic sets. 
Then, by applying the methods from the last paragraph of Step~5, we will show that the ordinary decomposition matrices of $B$ and $b$ coincide up to basic sets and up to permutations of $\Irr(B)$. This is actually necessary for the existence of a perfect isometry. Also note that the existence of an integral left inverse is generally true for ordinary decomposition matrices (see \cite[proof of Theorem~3.6.14]{Nagao}).

Since also the irreducible characters of $b$ are $p$-rational, the generalized decomposition matrices of $b$ 
can be obtained completely analogously to $Q_u$. 
In order to apply \cite[Theorem~2]{WatanabePerfIso}, it suffices to prove that the matrices $Q_u$ are unique in the following sense: Whenever we have two candidates $Q_u$ and $Q_u'$ for each $u\in\mathcal{R}$, there exists a signed permutation matrix $P\in\mathbb{Z}^{k(B)\times k(B)}$ not depending on $u$ and matrices $S_u\in\GL(l(b_u),\mathbb{Z})$ such that $PQ_u'S_u=Q_u$ for every $u\in\mathcal{R}$ (note that we do not need to consider $u=1$). 
We will manage this step in the following manner: 
We fix the order of $\Irr(B)$ such that $M_1$ is uniquely determined by $Q_1$.
Then we use the solutions from Plesken's algorithm in order to show that the matrices $M_u$ are uniquely determined by $M_1$. 
Finally, the eigenspace argument described above allows us to reconstruct $Q_u$ from $M_u$. 
In case $p=3$ we are able to prove this kind of uniqueness for $Q_u$ without using $Q_1$ or $M_1$.

\subsection{Step~7: Extend perfect isometries to isotypies} 
By Step~6, there exists a perfect isometry $I:\CF(G,B)\to\CF(\N_G(D),b)$ where $\CF(G,B)$ denotes the space of class functions with basis $\Irr(B)$ over $K$. It remains to show that $I$ is also an isotypy. In order to do so, we follow \cite[Section~V.2]{Cabanes}. For each $u\in\mathcal{R}\cup\{1\}$ let $\CF(\C_G(u)_{2'},b_u)$ be the space of class functions on $\C_G(u)$ which vanish on the $p$-singular classes and are spanned by $\IBr(b_u)$. The decomposition map $d^u_G:\CF(G,B)\to\CF(\C_G(u)_{p'},b_u)$ is 
defined by 
\[d^u_G(\chi)(s):=\chi(i_{b_u}us)=\sum_{\phi\in\IBr(b_u)}{d^u_{\chi\phi}\phi(s)}\] 
for $\chi\in\Irr(B)$ and $s\in\C_G(u)_{p'}$ where $i_{b_u}$ is the block idempotent of $b_u$ over $\mathcal{O}$. 
Let $\beta_u$ be a Brauer correspondent of $b$ in $\C_G(u)\cap\N_G(D)$. 
Then $I$ determines isometries 
\[I^u:\CF(\C_G(u)_{p'},b_u)\to\CF(\C_{\N_G(D)}(u)_{p'},\beta_u)\] 
by the equation $d^u_{\N_G(D)}\circ I=I^u\circ d^u_G$. Note that $I^1$ is the restriction of $I$. We need to show that $I^u$ can be extended to a perfect isometry $\widehat{I^u}:\CF(\C_G(u),b_u)\to\CF(\C_{\N_G(D)}(u),\beta_u)$ for each $u\in\mathcal{R}$ which does not depend on the generator $u$ of $\langle u\rangle$. By the construction of $I$, we can choose basic sets $\phi_1,\ldots,\phi_l$ (resp. $\widetilde{\phi}_1,\ldots,\widetilde{\phi}_l$) of $b_u$ (resp. $\beta_u$) such that $I^u(\phi_i)=\widetilde{\phi}_i$. The Cartan matrix of $b_u$ and $\beta_u$ with respect to these basic sets is given as in \eqref{Cu}.

The block $b_u$ has defect group $D$, and the focal subgroup $[D,I(b_u)]$ has order at most $p$. These blocks are well understood by results of Watanabe~\cite{Watanabe1,WatanabeCycFoc}. 
In particular, by \cite[Corollary]{WatanabeCycFoc} there exists a perfect isometry between $b_u$ and $\beta_u$, but we must take care of the extension property.
As before let $e_b:=\lvert I(b_u)\rvert=\lvert\C_{I(B)}(u)\rvert$. Then $k(b_u)=p(\frac{p-1}{e_u}+e_u)$ and $l(b_u)=e_u$ (cf. \cite[Lemma~9]{SambaleC4}). We compute the ordinary decomposition matrix of $b_u$. If $e_u=1$, then $b_u$ is nilpotent. In this case there exists a perfect isometry $\widehat{I^u}:\CF(\C_G(u),b_u)\to\CF(\C_{\N_G(D)}(u),\beta_u)$ sending a $p$-rational character $\psi\in\Irr(b_u)$ to a $p$-rational character $\widetilde{\psi}\in\Irr(\beta_u)$. Since $\psi$ is an extension of $\pm\phi_1$, it is clear that $\widehat{I^u}$ extends $I^u$. If we take another generator $v$ of $\langle u\rangle$, then the ordinary decomposition numbers of $\psi$ are of course the same. Therefore, $\widehat{I^u}$ only depends on $\langle u\rangle$. 

It remains to deal with the case where $b_u$ is non-nilpotent. Then $p$ is odd, $|[D,I(b_u)]|=p$ and $D=[D,I(b_u)]\times\C_D(I(b_u))$. Since $[D,I(b_u)]$ acts freely on $\Irr(B)$ via the $*$-construction (see \cite{RobinsonFocal}), the rows of the ordinary decomposition matrix $\Gamma_1$ of $b_u$ come in groups of $p$ equal rows each. Hence, in order to determine $\Gamma_1$ with respect to the chosen basic set $\phi_1,\ldots,\phi_l$, it suffices to solve the matrix equation $X^\text{T}X=\frac{1}{p}C_u$ for $X\in\mathbb{Z}^{\frac{1}{p}k(b_u)\times l(b_u)}$ (i.\,e. we actually compute the decomposition matrix of a block with defect group of order $p$). A detailed analysis of the solutions $X$ has been carried out in \cite[proof of Proposition~6]{SambaleC3}. As a result, one has
\[X=\begin{pmatrix}\label{ind2}
1&&0\\
&\ddots&\\
0&&1\\
1&\cdots&1\\
\vdots&\ddots&\vdots\\
1&\cdots&1
\end{pmatrix},\]
after choosing the order and signs of $\Irr(b_u)$. Then 
\[\Gamma_1=(\underbrace{X^\text{T},\ldots,X^\text{T}}_{p\text{ times}})^\text{T}.\] 
(For an alternative approach, one could also use \cite[Corollary]{WatanabeCycFoc} and compute $\Gamma_1$ in the local situation $D\rtimes I(b_u)$.)
In particular, we may choose $p$-rational (generalized) characters $\psi_1,\ldots,\psi_l\in\pm\Irr(b_u)$ (resp. $\widetilde{\psi}_1,\ldots,\widetilde{\psi}_l\in\pm\Irr(\beta_u)$) such that $\psi_i$ extends $\phi_i$ for $i=1,\ldots,l$. It remains to show that there is a perfect isometry sending $\psi_i$ to $\widetilde{\psi}_i$. 
Let $v\in\C_D(I(b_u))$. Then $d^v_{\psi_i\phi_j}=\zeta d^1_{\psi_i\phi_j}=\zeta\delta_{ij}$ for a $p$-th root of unity $\zeta$. Since $\psi_i$ is $p$-rational and $p>2$, we must have $d^v_{\psi_i\phi_j}=d^1_{\psi_i\phi_j}=\delta_{ij}$. Now let $v\in D\setminus\C_D(I(b_u))$. Then the Brauer correspondent of $b_u$ in $\C_G(u)\cap\C_G(v)$ is nilpotent. Thus, $d^v_{\psi_i\lambda_v}=\pm1$. 
Let us choose signs for $\lambda_v$ such that $d^v_{\psi_1\lambda_v}=1$ for all $v\in D\setminus\C_D(I(b_u))$. We will show now that $d^v_{\psi_i\lambda_v}=1$ for all $v\in D\setminus\C_D(I(b_u))$ and $i=2,\ldots,l$. For this let $\mathcal{S}$ be a set of representatives for the $I(b_u)$-conjugacy classes of $[D,I(b_u)]$. Then $\lvert\mathcal{S}\rvert=1+\frac{p-1}{e_u}$. Moreover, $p^2C_u^{-1}=(-\frac{p-1}{e_u}+p\delta_{ij})$ and $m_{\psi_1\psi_i}^1=-\frac{p-1}{p^2e_u}$. Consequently,
\[0=(\psi_1,\psi_i)_{\C_G(u)}=\sum_{v\in\C_D(I(b_u))}\sum_{w\in\mathcal{S}}{m_{\psi_1\psi_i}^{vw}}=-\frac{p-1}{pe_u}+\frac{1}{p^2}\sum_{ v\in\C_D(I(b_u))}\sum_{1\ne w\in\mathcal{S}}{d^{vw}_{\psi_i\lambda_{vw}}}\]
and $d^{vw}_{\psi_i\lambda_{vw}}=1$. Therefore, for any permutation $\pi\in S_l$ there exists a perfect isometry $\CF(\C_G(u),b_u)\to\CF(\C_G(u),b_u)$ sending $\psi_i$ to $\psi_{\pi(i)}$. Hence, by \cite[Corollary]{WatanabeCycFoc} there exists a perfect isometry $\widehat{I^u}:\CF(\C_G(u),b_u)\to\CF(\C_{\N_G(D)}(u),\beta_u)$ sending $\psi_i$ to $\widetilde{\psi}_i$. In particular $\widehat{I^u}$ extends $I^u$. Since $\widehat{I^u}$ is defined by means of $p$-rational characters, it does not depend on the choice of the generator of $\langle u\rangle$. 

Summarizing, we have shown that in our situation the perfect isometry coming from Step~6 can always be extended to an isotypy. Thus, for the proof of Theorem~\ref{main} it suffices to complete the Steps~1 to 6. 

\section{Proof of Theorem~\ref{main}}
\subsection{The case $p=3$}

For $p=3$ we can use the methods described in the previous section to obtain a stronger result which handles all cases immediately.

\begin{Theorem}
Let $B$ and $\widetilde{B}$ be blocks of \textup{(}possibly different\textup{)} finite groups with a common elementary abelian defect group of order $9$ and isomorphic inertial quotients. If $k(B)=k(\widetilde{B})$ and $l(B)=l(\widetilde{B})$, then $B$ and $\widetilde{B}$ are isotypic.
\end{Theorem}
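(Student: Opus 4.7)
The plan is to run the general method from Steps~1--7 uniformly in the input data, so that two blocks sharing $|D|$, $I(B)$, $k(B)$ and $l(B)$ necessarily produce the same generalized decomposition data up to the usual equivalences. Since $D\cong Z_3\times Z_3$ and $I(B)\le\GL(2,3)$ is a $3'$-group, the candidates for $I(B)$ form a short finite list of subgroups of a Sylow $2$-subgroup of $\GL(2,3)$, and for each candidate the orbit structure on $D\setminus\{1\}$ and the invariants $e_u=|\C_{I(B)}(u)|$ are determined by group theory alone.

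First I would fix $I(B)$ and a set $\mathcal{R}$ of orbit representatives for $I(B)$ on $D\setminus\{1\}$. The equations from Step~3, namely
\[p^2-1=\sum_{u\in\mathcal{R}}|I(B):\C_{I(B)}(u)|\quad\text{and}\quad k(B)-l(B)=\sum_{u\in\mathcal{R}}|\C_{I(B)}(u)|,\]
leave only finitely many admissible pairs $(k(B),l(B))$. For each admissible tuple and each $u\in\mathcal{R}$, the Cartan matrix $C_u$ is given by \eqref{Cu} up to basic sets. Running Plesken's algorithm on $C_u$, combined with Brauer's height-zero bound $p^2m_r\in\mathbb{Z}\setminus p\mathbb{Z}$ on diagonal contributions and with the Broué--Puig constraints $\sum_{u\in\mathcal{R}\cup\{1\}}\lambda(u)M_u\in\mathbb{Z}^{k(B)\times k(B)}$ for every $I(B)$-stable generalized character $\lambda$ of $D$ supplied by Lemma~\ref{stabchar}, enumerates finitely many candidates for the columns of each $Q_u$.

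The central claim is then that, \emph{without} appealing to $Q_1$ or $M_1$, the resulting family $(Q_u)_{u\in\mathcal{R}}$ is unique up to basic sets and up to a single signed bijection of the rows (viewed as a signed identification of $\Irr(B)$ with $\Irr(\widetilde{B})$). Granting this, both $B$ and $\widetilde{B}$ yield the same generalized decomposition data in this sense, and \cite[Theorem~2]{WatanabePerfIso} produces a perfect isometry $I\colon\CF(G,B)\to\CF(\widetilde{G},\widetilde{B})$. The extension of $I$ to an isotypy then follows verbatim from Step~7, whose argument is purely local: it only uses that each $b_u$ and its analogue on the $\widetilde{B}$-side has defect group $D$ with focal subgroup of order at most $p$, which is automatic in the present setting.

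The main obstacle is the uniqueness claim for $(Q_u)_{u\in\mathcal{R}}$. I expect to resolve it by a case-by-case GAP enumeration: for each candidate $I(B)$ and each admissible $(k(B),l(B))$, Plesken's algorithm produces a finite list of integer matrices $X$ with $X^\text{T}X=C_u$, and the eigenspace/backtracking procedure from the last paragraph of Step~5 (forming $Y_i:=p^2X_i(X_i^\text{T}X_i)^{-1}X_i^\text{T}$ and searching for a signed permutation matrix $P$ with $PY_1P^\text{T}=Y_2$) is used to collapse solutions that agree up to basic sets. The delicate point is coordinating the signed permutations across different $u\in\mathcal{R}$: the $\lambda$-constraints from Lemma~\ref{stabchar} must be strong enough to pin down a \emph{single} $P$ valid simultaneously for all $u$, since otherwise one would only obtain isolated perfect isometries on each local piece rather than a global one.
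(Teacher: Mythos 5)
Your proposal captures the right spirit for the larger inertial quotients, and your observation that for $p=3$ one can avoid $Q_1$ and $M_1$ matches the paper's own remark. But there is a genuine gap: the paper's proof of this theorem does \emph{not} run the Plesken/Brou\'e--Puig machinery uniformly over all candidate $I(B)\le SD_{16}$. For $I(B)$ cyclic of order at most $4$ and for $I(B)\cong Z_2\times Z_2$, the paper instead appeals directly to the isotypy theorems of Usami \cite{Usami23I}, Puig--Usami \cite{UsamiZ4,UsamiZ2Z2}, distinguishing the possible types with Kiyota \cite{Kiyota} and Kessar \cite{KessarC3C3}; only for $I(B)\in\{Z_8,Q_8,SD_{16},D_8\}$ does the paper carry out the combinatorial uniqueness argument you describe. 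This is not a stylistic choice. When $I(B)$ is small, the set $\mathcal{R}$ is large (e.g.\ four orbits for the $-1$ action of $Z_2$) and each $Q_u$ is a single column with $Q_u^\text{T}Q_u=(9)$; there are many such columns and many orthogonal families of them, and nothing in the Plesken-plus-$*$-construction toolbox you list obviously forces the family $(Q_u)_{u\in\mathcal{R}}$ to be unique up to basic sets and one global signed permutation of rows. Your ``I expect to resolve it by a case-by-case GAP enumeration'' leaves exactly these cases unproved, and the burden is to either carry out that enumeration and confirm uniqueness, or, as the paper does, import the Usami/Puig--Usami results.

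A secondary gap: to invoke \cite[Theorem~2]{WatanabePerfIso} you also need the ordinary decomposition matrices of $B$ and $\widetilde{B}$ to coincide up to basic sets and a compatible signed permutation of characters. You assert uniqueness of $(Q_u)_{u\in\mathcal{R}}$ ``without appealing to $Q_1$'' and then jump to the perfect isometry, but the isometry data includes $Q_1$. In the cases the paper handles combinatorially this does follow, because the columns of $Q_1$ span the integral orthogonal complement of $\bigcup_u Q_u$, so uniqueness of the $Q_u$ propagates to $Q_1$; you should say this explicitly, and note that for the $D_8$ subcase $(k,l)=(6,2)$ and for $SD_{16}$ the argument also needs the $p$-rationality analysis (via the $\alpha,\beta$ decomposition and Brauer's permutation lemma) before Plesken's machinery is even applicable.
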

\begin{proof}
We need to discuss various cases according to the parameters $I(B)$, $k(B)$ and $l(B)$. Suppose first that $I(B)$ is cyclic of order at most $4$. Then by \cite{Usami23I,UsamiZ4}, $B$ is isotypic to the principal block of $D\rtimes I(B)$. For $I(B)\cong Z_2$, there are two essentially different actions of $I(B)$ on $D$, but these can be distinguished by the knowledge of $k(B)$ (see \cite{Kiyota}). 
The same arguments apply for $\widetilde{B}$. Hence, $B$ and $\widetilde{B}$ are isotypic. Now let $I(B)\cong Z_2\times Z_2$. Then by \cite{UsamiZ2Z2}, $B$ is isotypic to the principal block of $D\rtimes I(B)$ or to a non-principal block of $D\rtimes Q_8$ where the kernel of the action of $Q_8$ on $D$ has order $2$. Fortunately, these two types can also be distinguished with \cite{Kiyota} (see also \cite{KessarC3C3}). 

Suppose next that $I(B)\in\{Z_8,Q_8\}$. Then $I(B)$ acts regularly on $D\setminus\{1\}$. It follows that $k(B)-l(B)=1$. By \cite{Kiyota}, one has $k(B)\in\{3,6,9\}$ (where the first case would contradict Alperin's Weight Conjecture). In all three cases $Q_u$ ($u\in\mathcal{R}$) is essentially uniquely determined as $(1,2,2)^\text{T}$, $(1,1,1,1,1,2)^\text{T}$ or $(1,1,1,1,1,1,1,1,1)^\text{T}$ respectively. Thus, \cite[Theorem~2]{WatanabePerfIso} implies the claim.

Now let $I(B)\cong SD_{16}$. Here by \cite{WatanabeSD16} we have $k(B)=9$ and $l(B)=7$. Moreover, $I(B)$ acts transitively on $D\setminus\{1\}$, i.\,e. $\lvert\mathcal{R}\rvert=1$. We show first that the characters in $\Irr(B)$ are $p$-rational. Suppose the contrary. We follow the proof of \cite[Proposition~(2E)]{Kiyota}.
Since the decomposition numbers of a block with cyclic defect group are $0$ or $1$, we obtain the exact Cartan matrix (not up to basic sets) of $b_u$ for $u\in\mathcal{R}$ as 
\[C_u=3\begin{pmatrix}
2&1\\1&2
\end{pmatrix}\]
(the exactness is important here). The two columns of $Q_u$ form a pair of complex conjugate vectors. So we only need to determine the first column $q_1$. There are integral vectors $\alpha,\beta\in\mathbb{Z}^9$ such that $q_u=\alpha+\beta\zeta$ where $\zeta=e^{2\pi i/3}$. An easy calculations gives the following scalar products $(\alpha,\alpha)=5$, $(\beta,\beta)=2$ and $(\alpha,\beta)=1$. This means that some entries of $q$ must vanish. However, this contradicts a theorem by Brauer (see \cite[Proposition 1.37]{habil}). Therefore, we have shown that any block with elementary abelian defect group of order $9$ and inertial quotient $SD_{16}$ has only $p$-rational irreducible characters. 
For the determination of $Q_u$ we observe first that all entries of $Q_u$ are $0$ or $\pm1$. We conclude further that

\begin{equation}\label{Qu}
Q_u=\begin{pmatrix}
1&1&1&1&1&1&.&.&.\\
1&1&1&.&.&.&1&1&1
\end{pmatrix}^\text{T}
\end{equation}
up to basic sets and permutation and signs of $\Irr(B)$. Therefore, $Q_u$ is essentially unique and the claim follows.

Finally, it remains to handle the case $I(B)\cong D_8$. Here $\mathcal{R}=\{u,v\}$ where $u$ (resp. $v$) is conjugate to $u^{-1}$ (resp. $v^{-1}$) under $I(B)$.
Moreover, there are two subcases $(k(B),l(B))\in\{(9,5),(6,2)\}$ (see \cite{Kiyota}). Suppose first that $k(B)=9$ and $l(B)=5$. Then the same argument as above shows that all characters in $\Irr(B)$ are $p$-rational. Also, we may assume that $Q_u$ is given as in \eqref{Qu}. Now consider the first row $r$ of $Q_v$. Certainly, we may choose the basic set such that $r\in\{(1,0),(1,1)\}$. In case $r=(1,0)$ we apply the basic set transformation $\bigl(\begin{smallmatrix}
1&1\\0&-1
\end{smallmatrix}\bigr)$. This transformation does not change $C_v$ ($=C_u$) and afterwards $r=(1,1)$. Since $u$ is conjugate to $u^{-1}$ under $I(B)$, one has $D=\langle u,v\rangle$. There is an $I(B)$-stable generalized character $\lambda$ of $D$ given by $\lambda(1)=0$, $\lambda(u)=3$ and $\lambda(v)=6$.
This shows $3M_u+6M_v\in\mathbb{Z}^{9\times 9}$ and thus $9M_u\equiv 9M_v\pmod{3}$. An easy calculation shows that
\[9M_u=\begin{pmatrix}
2J&J&J\\J&2J&-J\\J&-J&2J
\end{pmatrix}\]
where $J$ is the $3\times 3$ matrix whose entries are all $1$. Now consider the first row $m$ of $M_v$. Its first entry is certainly $2$. The second and third entries are $-1$ or $2$. The other entries are $1$ or $-2$. Altogether, $m$ contains three entries $\pm2$ and six entries $\pm1$. Moreover, by the orthogonality relations $M_uM_v=0$. Since we can still permute the characters $4$ to $6$ and $7$ to $9$ without changing $M_u$, we may assume that $m=(2,-1,-1,-2,1,1,-2,1,1)$. By considering other rows of $M_v$ it follows easily that
\[Q_v=\begin{pmatrix}
1&-1&.&-1&1&.&-1&1&.\\
1&.&-1&-1&.&1&-1&.&1
\end{pmatrix}^\text{T}.\]

Now let $k(B)=6$ and $l(B)=2$. It is easy to see that there is no integral matrix $Q_u$ such that $Q_u^\text{T}Q_u=C_u$ with $C_u$ as above. Therefore $B$ contains pairs of $p$-conjugate characters. Since $u$ and $u^{-1}$ are $I(B)$-conjugate, we conclude that the columns of $Q_u$ are complex conjugate to each other. The same must hold for $Q_v$. Therefore, it suffices to determine the first column of $Q_u$ and of $Q_v$. Similarly as above, one can express the first column of $Q_u$ as $q_u=\alpha+\beta\zeta$ where $\zeta=e^{2\pi i/3}$. Then $(\alpha,\alpha)=5$, $(\beta,\beta)=2$ and $(\alpha,\beta)=1$. This gives the following solution
\[\begin{pmatrix}
\alpha\\\beta
\end{pmatrix}=\begin{pmatrix}
1&.&1&1&1&1\\
1&-1&.&.&.&.
\end{pmatrix}.\]
Observe that the first two characters are $p$-conjugate.
For the first column of $Q_v$ we have similar vectors $\alpha'$ and $\beta'$. By Brauer's permutation lemma (see \cite[Lemma~IV.6.10]{Feit}), $B$ has exactly two pairs of $p$-conjugate vectors. Thus, the first two entries of $\beta'$ must vanish. 
The orthogonality relations imply $(\alpha,\alpha')=(\alpha,\beta')=(\beta,\alpha')=(\beta,\beta')=0$.
Since we can still permute the characters $3$ to $6$, we get just two solutions:
\begin{align*}
\begin{pmatrix}
\alpha'\\\beta'
\end{pmatrix}=\begin{pmatrix}
1&1&1&.&-1&-1\\
.&.&1&-1&.&.
\end{pmatrix}&&\text{or}&&\begin{pmatrix}
\alpha'\\\beta'
\end{pmatrix}=\begin{pmatrix}
1&1&-1&.&1&-1\\
.&.&-1&1&.&.
\end{pmatrix}.
\end{align*}
A calculations shows that only in the first case we have $9M_u\equiv 9M_v\pmod{3}$. Hence, $Q_v$ is essentially unique and the proof is complete.
\end{proof}

\subsection{The case $p=5$}\label{secp5}
By Step~1 we have 
\[(k(B),l(B))\in\{(16,12),(20,16),(16,14),(20,14),(14,6)\}.\]
Now we determine $I(B)$. If $k(B)=16$ and $l(B)=12$ it turns out that in all cases the multiplicity of $5$ as an elementary divisor of $C_1$ is $2$. Hence, $\lvert\mathcal{R}\rvert\le 2$. This gives the possibilities $I(B)\cong Z_4\times S_3$ or $I(B)\cong\SL(2,3)\rtimes Z_4\cong\texttt{SmallGroup}(96,67)$. In the second case the Schur multiplier of $I(B)$ is trivial and Alperin's Weight Conjecture (which is known to be true here) would imply $l(B)=k(I(B))=16$. Hence, $I(B)\cong Z_4\times S_3$ and $D\rtimes I(B)\cong \texttt{SmallGroup}(600,151)$ (thus the action is essentially unique).

Now let $k(B)=20$ and $l(B)=16$. Then the multiplicity of $5$ as an elementary divisor of $C_1$ is $3$ in all cases. Thus, $\lvert\mathcal{R}\rvert=1$. This gives $I(B)\cong\SL(2,3)\rtimes Z_4\cong\texttt{SmallGroup}(96,67)$. 
Here $D\rtimes I(B)$ is too large to have a small group id. However, we can describe the action as $D\rtimes I(B)\cong\texttt{PrimitiveGroup}(25,19)$.

Next, let $k(B)=16$ and $l(B)=14$. Here $5$ occurs just once as an elementary divisor of $C_1$. Therefore, $\lvert\mathcal{R}\rvert=1$. It follows that $I(B)\cong\SL(2,3)\rtimes Z_2\cong\texttt{SmallGroup}(48,33)$ or $I(B)\cong Z_{24}\rtimes Z_2\cong\texttt{SmallGroup}(48,5)$.
In the second case the Schur multiplier of $I(B)$ is trivial and Alperin's Weight Conjecture would imply $l(B)=k(I(B))=18$.
Therefore, $I(B)\cong\SL(2,3)\rtimes Z_2$ and $D\rtimes I(B)\cong\texttt{SmallGroup}(1200,947)$. 

Let $k(B)=20$ and $l(B)=14$. Then $5$ occurs with multiplicity $4$ as an elementary divisor of $C_1$. This gives $\lvert\mathcal{R}\rvert\le 2$ and then $I(B)\cong Z_4\wr Z_2$. 
The action is given by $D\rtimes I(B)\cong\texttt{SmallGroup}(800,1191)$.
Finally, let $k(B)=14$ and $l(B)=6$. Here $5$ occurs with multiplicity $2$ as an elementary divisor of $C_1$. It follows that $I(B)\cong D_{12}\cong S_3\times Z_2$ and $D\rtimes I(B)\cong\texttt{SmallGroup}(300,25)$.
We summarize the results in Table~\ref{tab} (we use $BM$ for the Baby Monster, because $B$ is already reserved for the block).

\begin{table}[ht]
\begin{center}
\begin{tabular}{|c|c|c|c|c|c|}
\hline
$G$&no. of block(s)&$k(B)$&$l(B)$&$I(B)$&$D\rtimes I(B)$\\\hline
$2.Suz$&19&\multirow{8}{*}{16}&\multirow{8}{*}{12}&\multirow{8}{*}{$Z_4\times S_3$}&\multirow{8}{*}{$\texttt{SmallGroup}(600,151)$}\\
$Suz.2$&$2^+$&&&&\\
$3.Suz$&18, 19&&&&\\ 
$3.Suz.2$&24*&&&&\\
$6.Suz$&59, 60&&&&\\ 
$6.Suz.2$&53*&&&&\\
$Co_1$&3&&&&\\
$2.J_2.2$&9&&&&\\\hline
$2.Fi_{22}$&39&\multirow{13}{*}{20}&\multirow{13}{*}{16}&\multirow{13}{*}{$\SL(2,3)\rtimes Z_4$}&\multirow{13}{*}{$\texttt{PrimitiveGroup}(25,19)$}\\
$Fi_{22}.2$&$2^+$&&&&\\
$3.Fi_{22}$&39, 40&&&&\\
$3.Fi_{22}.2$&59*&&&&\\
$6.Fi_{22}$&107, 108&&&&\\
$6.Fi_{22}.2$&101*&&&&\\
$Fi_{23}$&2&&&&\\ 
$Fi_{24}'$&3&&&&\\
$Fi_{24}'.2$&$2^+$, 3, $4^+$, $5^+$&&&&\\
$3.Fi_{24}'$&45, 46&&&&\\
$3.Fi_{24}'.2$&69*&&&&\\
$BM$&2, 8&&&&\\
$M$&4&&&&\\\hline
$Fi_{24}'$&2&\multirow{2}{*}{16}&\multirow{2}{*}{14}&\multirow{2}{*}{$\SL(2,3)\rtimes Z_2$}&\multirow{2}{*}{$\texttt{SmallGroup}(1200,947)$}\\
$2.Fi_{22}.2$&59&&&&\\\hline
$3.Fi_{24}'$&47, 48&\multirow{2}{*}{20}&\multirow{2}{*}{14}&\multirow{2}{*}{$Z_4\wr Z_2$}&\multirow{2}{*}{$\texttt{SmallGroup}(800,1191)$}\\
$3.Fi_{24}'.2$&70*&&&&\\\hline
$2.J_2$&6&\multirow{2}{*}{14}&\multirow{2}{*}{6}&\multirow{2}{*}{$D_{12}$}&\multirow{2}{*}{$\texttt{SmallGroup}(300,25)$}\\
$2.Suz.2$&26&&&&\\\hline
\end{tabular}
\end{center}
\caption{faithful non-principal $5$-blocks with non-cyclic abelian defect groups}\label{tab}
\end{table}

If we have two blocks $B$ and $\widetilde{B}$ such that $k(B)=k(\widetilde{B})$ and $l(B)=l(\widetilde{B})$, then by Step~6 above, we can show that the ordinary decomposition matrices of $B$ and $\widetilde{B}$ coincide up to basic sets. Therefore, we only need to handle one example case for each pair $(k(B),l(B))$. The blocks marked with an asterisk in Table~\ref{tab} are Morita equivalent to corresponding blocks of $G'$ via the Fong-Reynolds Theorem. Hence, in this case the decomposition matrices 
are automatically equal. For example, the $24$-th block of $3.Suz.2$ covers the $18$-th and $19$-th block of $3.Suz$. In particular, these two blocks of $3.Suz$ are also Morita equivalent. 
A similar argument applies to the blocks marked with a plus sign. Here, the characters of $\Irr(B)$ restrict to irreducible characters of $G'$, and therefore $B$ is isomorphic to a block of $G'$. For example, the second block of $Suz.2$ is isomorphic to the principal block of $Suz$ (and of $Suz.2$).

In case $G=2.J_2$ there exists a splendid derived equivalence between $B$ and $b$ as it was shown in \cite{HollowayJ2}. This implies the existence of an isotypy. So we do not need to handle this case.

If $I(B)\cong \SL(2,3)\rtimes Z_4$ or $I(B)\cong\SL(2,3)\rtimes Z_2$, then the Schur multiplier of $I(B)$ is trivial. Hence, $b$ is Morita equivalent to the group algebra of $D\rtimes I(B)$. It follows that the characters in $\Irr(b)$ are $p$-rational. Moreover, one has $\mathcal{R}=\{u\}$ and the matrix $Q_u$ can easily be computed as the integral orthogonal space of $Q_1$. Then the claim follows. 

Let $G=Co_1$. Here $k(B)=16$, $l(B)=12$ and 
$I(B)\cong Z_4\times S_3$. Moreover, $I(B)$ has Schur multiplier $Z_2$, so that there are two possible Morita equivalence classes for $b$. In the twisted case we would have $l(b)=6$. Hence, $b$ is Morita equivalent to the group algebra of $D\rtimes I(B)$. In particular, the irreducible characters of $b$ are $p$-rational.

The action of $I(B)$ gives $\mathcal{R}=\{u,v\}$ with $l(b_u)=l(b_v)=2$. From the character table of $G$ we get
\begin{align*}
Q_1&=\begin{pmatrix}
1&.&.&.&.&.&.&.&.&.&.&.\\
.&1&.&.&.&.&.&.&.&.&.&.\\
.&.&1&.&.&.&.&.&.&.&.&.\\
.&.&.&1&.&.&.&.&.&.&.&.\\
.&.&.&.&1&.&.&.&.&.&.&.\\
.&.&.&.&.&1&.&.&.&.&.&.\\
.&.&.&.&.&.&1&.&.&.&.&.\\
.&.&.&.&.&.&.&1&.&.&.&.\\
.&.&.&.&.&.&.&.&1&.&.&.\\
.&.&.&.&.&.&.&.&.&1&.&.\\
.&.&.&.&.&.&.&.&.&.&1&.\\
.&.&.&.&.&.&.&.&.&.&.&1\\
.&.&.&-1&1&.&1&.&.&.&.&.\\
.&.&-1&1&-1&1&.&1&.&.&.&.\\
1&-1&.&-1&1&-1&1&-1&-1&.&1&1\\
.&-1&.&-1&.&-1&.&-1&-1&1&1&1
\end{pmatrix},\\
25M_1&=\begin{pmatrix}
19&2&1&-2&-2&1&-3&1&2&4&-2&-2&-3&1&6&-4\\
2&21&-2&-1&-1&-2&1&-2&-4&2&4&4&1&-2&-2&-2\\
1&-2&19&2&-3&4&3&4&-2&1&2&2&-2&-6&-1&-1\\
-2&-1&2&16&6&-3&4&-3&-1&3&1&1&-6&2&2&-3\\
-2&-1&-3&6&16&2&-6&2&-1&3&1&1&4&-3&2&-3\\
1&-2&4&-3&2&19&-2&-6&-2&1&2&2&3&4&-1&-1\\
-3&1&3&4&-6&-2&16&-2&1&2&-1&-1&6&3&3&-2\\
1&-2&4&-3&2&-6&-2&19&-2&1&2&2&3&4&-1&-1\\
2&-4&-2&-1&-1&-2&1&-2&21&2&4&4&1&-2&-2&-2\\
4&2&1&3&3&1&2&1&2&19&-2&-2&2&1&-4&6\\
-2&4&2&1&1&2&-1&2&4&-2&21&-4&-1&2&2&2\\
-2&4&2&1&1&2&-1&2&4&-2&-4&21&-1&2&2&2\\
-3&1&-2&-6&4&3&6&3&1&2&-1&-1&16&-2&3&-2\\
1&-2&-6&2&-3&4&3&4&-2&1&2&2&-2&19&-1&-1\\
6&-2&-1&2&2&-1&3&-1&-2&-4&2&2&3&-1&19&4\\
-4&-2&-1&-3&-3&-1&-2&-1&-2&6&2&2&-2&-1&4&19
\end{pmatrix}.
\end{align*}
Since $C_u=5\bigl(\begin{smallmatrix}
3&2\\2&3
\end{smallmatrix}\bigr)$, one has $25m^u_{\chi\chi}\ge 2$ and similarly $25m_{\chi\chi}^v\ge 2$. Hence, $25m_{\chi\chi}^u\le 7$. Now Plesken's algorithm yields essentially two possibilities for $Q_u$:
\begin{align*}
&\begin{pmatrix}
2&1&1&1&1&1&.&.&.&.&1&1&1&1&1&1\\
1&2&.&.&.&.&1&1&1&1&1&1&1&1&1&1
\end{pmatrix}^\text{T},\\
&\begin{pmatrix}
2&2&1&.&.&.&.&.&.&.&1&1&1&1&1&1\\
1&1&.&1&1&1&1&1&1&1&1&1&1&1&1&1
\end{pmatrix}^\text{T}.
\end{align*}

We have $25m_{\chi\chi}^u\in\{2,3,7\}$. 
By the $*$-construction, $25M_u\equiv 25M_v\pmod{5}$. 
Suppose that the second possibility above occurs for $Q_u$. Then there are distinct characters $\chi,\psi\in\Irr(B)$ such that $49m_{\chi\psi}^u=\pm7$ and $49m_{\chi\psi}^v=\pm2$. However, this gives the contradiction $m_{\chi\psi}^1+m_{\chi\psi}^u+m_{\chi\psi}^v\ne 0$.
It follows that only the first matrix above can occur for $Q_u$ and for $Q_v$. It remains to verify the uniqueness of these matrices (if $Q_1$ is given as above). We may assume that the first row of $Q_u$ is $(1,0)$ (so that $25m_{1,1}^u=3$). Now $\{25m_{1,3}^u,25m_{1,3}^v\}=\{-3,2\}$. By interchanging $u$ and $v$ if necessary, we may assume that $25m_{1,3}^u=-3$. Then the third row of $Q_u$ is $(-1,0)$. Continuing these arguments gives
\begin{align*}
Q_u=\begin{pmatrix}
1&.\\
-1&-1\\
-1&.\\
-1&-2\\
1&1\\
.&-1\\
2&1\\
.&-1\\
-1&-1\\
.&1\\
1&1\\
1&1\\
-1&-1\\
-1&.\\
-1&.\\
.&-1
\end{pmatrix},&&
Q_v=\begin{pmatrix}
-1&.\\
1&1\\
.&1\\
-1&-1\\
1&2\\
1&.\\
1&1\\
1&.\\
1&1\\
.&-1\\
-1&-1\\
-1&-1\\
-2&-1\\
.&1\\
1&.\\
.&1
\end{pmatrix}.
\end{align*} 
This settles the case $I(B)\cong Z_4\times S_3$.

Now let $B$ be the $47$-th $5$-block of $G=3.Fi_{24}'$. Then $k(B)=20$, $l(B)=14$ and $I(B)\cong Z_4\wr Z_2$. The Schur multiplier of $I(B)$ is $Z_2$. 
In the twisted case one has $l(b)=5$ which is not true. Therefore, $b$ is Morita equivalent to the group algebra of $D\rtimes I(B)$. In particular, the characters in $\Irr(b)$ are $p$-rational. 
We have $\mathcal{R}=\{u,v\}$ such that $l(b_u)=4$ and $l(b_v)=2$. From the character table of $G$ we obtain
\begin{align*}
Q_1&=\begin{pmatrix}
1&.&.&.&.&.&.&.&.&.&.&.&.&.\\
.&1&.&.&.&.&.&.&.&.&.&.&.&.\\
.&.&1&.&.&.&.&.&.&.&.&.&.&.\\
.&.&.&1&.&.&.&.&.&.&.&.&.&.\\
.&.&.&.&1&.&.&.&.&.&.&.&.&.\\
.&.&.&.&.&1&.&.&.&.&.&.&.&.\\
.&.&.&.&.&.&1&.&.&.&.&.&.&.\\
.&.&.&.&.&.&.&1&.&.&.&.&.&.\\
.&.&.&.&.&.&.&.&1&.&.&.&.&.\\
.&.&.&.&.&.&.&.&.&1&.&.&.&.\\
.&.&.&.&.&.&.&.&.&.&1&.&.&.\\
.&.&.&.&.&.&.&.&.&.&.&1&.&.\\
.&.&.&.&.&.&.&.&.&.&.&.&1&.\\
.&.&.&.&.&.&.&.&.&.&.&.&.&1\\
1&-1&-1&1&1&.&.&.&.&.&.&.&.&.\\
.&.&.&.&-1&.&.&1&-1&.&1&.&1&.\\
1&.&-1&1&.&.&-1&.&.&-1&1&1&1&.\\
.&.&.&1&.&-1&-1&.&1&.&.&1&.&.\\
.&1&1&.&-1&-1&-1&1&.&.&1&.&.&1\\
.&1&.&.&-1&-1&-1&.&.&-1&1&1&.&1
\end{pmatrix},\\
25M_1&=\left(\begin{smallmatrix}
18&2&4&-4&-4&-1&2&-1&1&2&-3&1&-1&-2&4&-2&3&-3&3&-1\\
2&18&-4&4&4&1&3&1&-1&3&-2&-1&1&-3&-4&-3&2&-2&2&1\\
4&-4&17&3&3&2&1&-3&-2&-4&1&3&2&-1&-3&-1&-1&1&4&-3\\
-4&4&3&17&-3&3&4&-2&-3&-1&-1&-3&-2&1&3&1&1&4&1&-2\\
-4&4&3&-3&17&-2&-1&3&-3&-1&4&2&3&1&3&-4&1&-1&1&-2\\
-1&1&2&3&-2&17&-4&2&3&1&1&3&-3&4&-3&-1&4&-4&-1&-3\\
2&3&1&4&-1&-4&18&1&4&-2&3&4&1&2&1&2&-3&-2&-3&1\\
-1&1&-3&-2&3&2&1&17&3&-4&-4&3&-3&-1&2&4&-1&1&4&-3\\
1&-1&-2&-3&-3&3&4&3&17&-1&4&-3&3&1&-2&-4&1&4&1&-2\\
2&3&-4&-1&-1&1&-2&-4&-1&18&3&4&1&2&1&2&-3&3&2&-4\\
-3&-2&1&-1&4&1&3&-4&4&3&18&-1&-4&-3&1&2&2&-2&2&1\\
1&-1&3&-3&2&3&4&3&-3&4&-1&17&-2&1&-2&1&1&4&-4&3\\
-1&1&2&-2&3&-3&1&-3&3&1&-4&-2&17&4&-3&4&4&1&-1&-3\\
-2&-3&-1&1&1&4&2&-1&1&2&-3&1&4&18&4&-2&-2&-3&3&4\\
4&-4&-3&3&3&-3&1&2&-2&1&1&-2&-3&4&17&-1&4&1&-1&-3\\
-2&-3&-1&1&-4&-1&2&4&-4&2&2&1&4&-2&-1&18&3&-3&3&-1\\
3&2&-1&1&1&4&-3&-1&1&-3&2&1&4&-2&4&3&18&2&-2&4\\
-3&-2&1&4&-1&-4&-2&1&4&3&-2&4&1&-3&1&-3&2&18&2&1\\
3&2&4&1&1&-1&-3&4&1&2&2&-4&-1&3&-1&3&-2&2&18&4\\
-1&1&-3&-2&-2&-3&1&-3&-2&-4&1&3&-3&4&-3&-1&4&1&4&17
\end{smallmatrix}\right).
\end{align*}
By Plesken's algorithm, there is essentially only one choice for $Q_v$:
\[\begin{pmatrix}
1&1&1&1&1&1&1&1&1&1&1&1&1&1&1&.&.&.&.&.\\
1&1&1&1&1&1&1&1&1&1&.&.&.&.&.&1&1&1&1&1
\end{pmatrix}^\text{T}.\]
There exists an $I(B)$-stable generalized character $\lambda$ of $D$ such that $\lambda(1)=5$, $\lambda(u)=0$ and $\lambda(v)=-5$. Consequently, $5M_1-5M_v\in\mathbb{Z}^{20\times 20}$ and $25M_1\equiv 25M_v\pmod{5}$. Thus, the first row of $25M_v$ has the form $m=(3,?,-1,1,1,-1,?,-1,1,?,?,1,-1,?,-1,?,?,?,?,-1)$ where every question mark stands for $\pm2$ or $\pm3$ (and $\pm2$ occurs five times). By the orthogonality relations, $m$ is orthogonal to all rows of $M_1$. This determines $m$ uniquely as $m=(3,2,-1,1,1,-1,-3,-1,1,-3,2,1,-1,3,-1,3,-2,2,-2,-1)$.
It follows that
\[Q_v=\begin{pmatrix}
1&.&-1&1&1&-1&-1&-1&1&-1&.&1&-1&1&-1&1&.&.&.&-1\\
.&-1&-1&1&1&-1&.&-1&1&.&-1&1&-1&.&-1&.&1&-1&1&-1
\end{pmatrix}^\text{T}.\]
Now we compute $M_u=1_{20}-M_1-M_v$ and this gives 
\[Q_u=
\begin{pmatrix}
1&-1&-1&1&1&1&.&.&.&.&.&.&1&-1&.&.&-1&.&.&1\\
.&.&-1&.&.&1&.&-1&.&-1&.&1&1&-1&1&.&-1&.&1&.\\
.&.&.&1&.&.&-1&.&1&.&.&1&1&-1&1&.&-1&-1&.&1\\
.&.&.&.&1&1&.&-1&1&.&-1&.&.&-1&1&1&-1&.&.&1
\end{pmatrix}^\text{T}.\]

Finally, let $B$ be the $26$-th $5$-block of $G=2.Suz.2$. Then $k(B)=14$, $l(B)=6$ and $I(B)\cong D_{12}$.
Here we have to be more careful, since there are four pairs of $p$-conjugate characters in $\Irr(B)$. Since $D\in\Syl_5(G)$, there are no difficulties in constructing $\N_G(D)$ and its character table. It follows that also $b$ has four pairs of $p$-conjugate characters. 
The Schur multiplier of $I(B)$ is $Z_2$. In the twisted case we would get $l(B)=3$. Hence, $b$ is Morita equivalent to the group algebra of $D\rtimes I(B)$. 
From the character table we obtain
\begin{align*}
Q_1&=\begin{pmatrix}
1&.&.&.&.&.\\
.&1&.&.&.&.\\
.&.&1&.&.&.\\
.&.&.&1&.&.\\
.&.&.&.&1&.\\
.&.&.&.&.&1\\
1&-1&.&.&-1&1\\
1&-1&.&.&-1&1\\
-1&1&1&.&1&.\\
-1&1&1&.&1&.\\
.&1&1&1&1&.\\
.&1&1&1&1&.\\
.&-1&.&-1&-1&1\\
.&-1&.&-1&-1&1
\end{pmatrix},\\
25M_1&=\begin{pmatrix}
13&4&2&-8&4&-2&3&3&-3&-3&2&2&-2&-2\\
4&17&-4&-4&-8&4&-1&-1&1&1&1&1&-1&-1\\
2&-4&13&-2&-4&-8&2&2&3&3&3&3&2&2\\
-8&-4&-2&13&-4&2&2&2&-2&-2&3&3&-3&-3\\
4&-8&-4&-4&17&4&-1&-1&1&1&1&1&-1&-1\\
-2&4&-8&2&4&13&3&3&2&2&2&2&3&3\\
3&-1&2&2&-1&3&8&8&-3&-3&2&2&3&3\\
3&-1&2&2&-1&3&8&8&-3&-3&2&2&3&3\\
-3&1&3&-2&1&2&-3&-3&8&8&3&3&2&2\\
-3&1&3&-2&1&2&-3&-3&8&8&3&3&2&2\\
2&1&3&3&1&2&2&2&3&3&8&8&-3&-3\\
2&1&3&3&1&2&2&2&3&3&8&8&-3&-3\\
-2&-1&2&-3&-1&3&3&3&2&2&-3&-3&8&8\\
-2&-1&2&-3&-1&3&3&3&2&2&-3&-3&8&8
\end{pmatrix}.
\end{align*}
In this order the first six characters are $p$-rational.
One can choose $\mathcal{R}=\{u,u^2,v,v^2\}$ such that $u$ and $u^{-1}$ are conjugate under $I(B)$. Moreover, $l(b_u)=l(b_v)=2$.
It follows that $Q_u$ is algebraically conjugate to $Q_{u^2}$.
Since the $p$-conjugate characters come in pairs, the entries of $Q_u$ must be real. Similarly for $Q_v$. Hence, it suffices to determine $Q_u$ and $Q_v$. Let $\rho:=e^{2\pi i/5}+e^{-2\pi i/5}=\frac{\sqrt{5}-1}{2}$. Then there are matrices $R_1,R_2\in\mathbb{Z}^{14\times 2}$ such that $Q_u=R_1+R_2\rho$. Since $\rho^2=1-\rho$, we obtain
\begin{align*}
C_u=Q_u^\text{T}Q_u&=R_1^\text{T}R_1+R_2^\text{T}R_2+(R_1^\text{T}R_2+R_2^\text{T}R_1-R_2^\text{T}R_2)\rho=R_1^\text{T}R_1+R_2^\text{T}R_2,\\
R_2^\text{T}R_2&=R_1^\text{T}R_2+R_2^\text{T}R_1.
\end{align*}
On the other hand, $Q_{u^2}=R_1+R_2(-1-\rho)=(R_1-R_2)-R_2\rho$. It follows that
\[0=Q_u^\text{T}Q_{u^2}=R_1^\text{T}(R_1-R_2)-R_2^\text{T}R_2.\]
Combining these formulas gives 
\begin{align*}
R_1^\text{T}R_1=3\begin{pmatrix}
3&2\\2&3
\end{pmatrix},&&
R_2^\text{T}R_2=2\begin{pmatrix}
3&2\\2&3
\end{pmatrix},&&
R_1^\text{T}R_2=\begin{pmatrix}
3&2\\2&3
\end{pmatrix}.
\end{align*}
Since $B$ has six $p$-rational characters, the matrix $R_2$ has at most eight non-zero rows. Furthermore, the rows come in pairs of the form $(r,-r)$. Hence, we can apply Plesken's algorithm with the matrix $\bigl(\begin{smallmatrix}
3&2\\2&3
\end{smallmatrix}\bigr)$. This gives the following solution for $R_2$:
\[\begin{pmatrix}
.&.&.&.&.&.&1&-1&.&.&1&-1&1&-1\\
.&.&.&.&.&.&.&.&1&-1&1&-1&1&-1
\end{pmatrix}^\text{T}.\]
If $r$ is the $7$-th row of $R_1$, then $r-(1,0)$ is the $8$-th row of $R_1$ and so on. This gives still several possibilities for $R_1$. By the shape of $M_1$ we see that
\[25m_{\chi\chi}^1=17\Longleftrightarrow 25m_{\chi\chi}^u=25m_{\chi\chi}^{u^2}=25m_{\chi\chi}^v=25m_{\chi\chi}^{v^2}=2.\]
Moreover, there is an $I(B)$-stable generalized character $\lambda$ of $D$ such that $\lambda(1)=15$, $\lambda(u)=\lambda(u^2)=5$ and $\lambda(v)=\lambda(v^2)=0$. This yields $2\cdot 25M_1\equiv M_u+M_{u^2}\pmod{5}$. In particular, if $\chi\in\Irr(B)$ is $p$-rational, we have $25m_{\chi\chi}^1\equiv 25m_{\chi\chi}^u\equiv 25m_{\chi\chi}^{u^2}\pmod{5}$ and similarly for $v$. We conclude that 
\[25m_{\chi\chi}^1=13\Longleftrightarrow 25m_{\chi\chi}^u=25m_{\chi\chi}^{u^2}=25m_{\chi\chi}^v=25m_{\chi\chi}^{v^2}=3.\]
Therefore, $R_1$ has two rows of the form $\pm(1,1)$ corresponding to $p$-rational characters. The other four rows corresponding to $p$-rational rows have the form $\pm(1,0)$ or $\pm(0,1)$. 
By symmetry reasons, the considerations for $Q_u$ also apply for $Q_v$. After interchanging $u$ and $v$ if necessary, we obtain
\begin{align*}
Q_u=\begin{pmatrix}
1&.\\-1&-1\\-1&.\\.&-1\\-1&-1\\.&1\\\multicolumn{2}{c}{?}
\end{pmatrix},&&
Q_v=\begin{pmatrix}
1&.\\-1&-1\\.&-1\\.&-1\\-1&-1\\1&.\\\multicolumn{2}{c}{?}
\end{pmatrix}.
\end{align*}
If $R_1'$ is the matrix consisting of the last eight rows of $R_1$, then $(R_1')^\text{T}R_1'=\bigl(\begin{smallmatrix}
5&4\\4&5
\end{smallmatrix}\bigr)$. Hence, the last eight rows of $Q_u$ are of the form $\pm(1+\rho,1)$, $\pm(-\rho,1)$, $\pm(1,1+\rho)$, $\pm(1,-\rho)$, $\pm(1+\rho,1+\rho)$, $\pm(-\rho,-\rho)$ where the last two possibilities occur twice each. 
The orthogonality $Q_1^\text{T}Q_u=0$ reveals the positions of these rows and their signs. 
Observe here that we are still able to permute each pair of $p$-conjugate characters without disturbing $Q_1$. 
This determines $Q_u$. After interchanging $v$ and $v^2$ if necessary, we also obtain $Q_v$ as follows:
\begin{align*}
Q_u=\begin{pmatrix}
1&.\\-1&-1\\-1&.\\.&-1\\-1&-1\\.&1\\1+\rho&1+\rho\\-\rho&-\rho\\1&1+\rho\\1&-\rho\\-1-\rho&-1-\rho\\\rho&\rho\\-1-\rho&-1\\\rho&-1
\end{pmatrix},&&
Q_v=\begin{pmatrix}
1&.\\-1&-1\\.&-1\\.&-1\\-1&-1\\1&.\\-1&-1-\rho\\-1&\rho\\\rho&\rho\\-1-\rho&-1-\rho\\1+\rho&1\\-\rho&1\\-\rho&-\rho\\1+\rho&1+\rho
\end{pmatrix}.
\end{align*}

\subsection{The case $p=7$}

Here we always have $k(B)=27$ and $l(B)\in\{24,21\}$. Suppose first that $l(B)=24$. Then $7$ occurs with multiplicity $2$ as an elementary divisor of $C_1$. Hence, $\lvert\mathcal{R}\rvert=1$ and $I(B)\cong 2.S_4^-\times Z_3\cong\texttt{SmallGroup}(144,121)$ where $2.S_4^-$ denotes the double cover of $S_4$ which is not isomorphic to $\GL(2,3)$. 
The action is given by $D\rtimes I(B)\cong \texttt{PrimitiveGroup}(49,29)$.
Now assume $l(B)=21$. Then $7$ occurs with multiplicity $4$ as an elementary divisor of $C_1$. Therefore, $\lvert\mathcal{R}\rvert\le 2$ and $I(B)\cong\SL(2,3)\times Z_3$. Here, $D\rtimes I(B)\cong \texttt{PrimitiveGroup}(49,27)$.
The groups appearing on Noeske's list all have trivial outer automorphism group. Thus, we do not extend the list here.

\begin{table}[ht]
\begin{center}
\begin{tabular}{|c|c|c|c|c|c|}
\hline
$G$&no. of block&$k(B)$&$l(B)$&$I(B)$&$D\rtimes I(B)$\\\hline
$BM$&2&\multirow{3}{*}{27}&\multirow{3}{*}{24}&\multirow{3}{*}{$2.S_4^-\times Z_3$}&\multirow{3}{*}{$\texttt{PrimitiveGroup}(49,29)$}\\
$2.BM$&73&&&&\\ 
$M$&2&&&&\\\hline 
$2.Co_1$&46&\multirow{2}{*}{27}&\multirow{2}{*}{21}&\multirow{2}{*}{$\SL(2,3)\times Z_3$}&\multirow{2}{*}{$\texttt{PrimitiveGroup}(49,27)$}\\
$BM$&4&&&&\\\hline
\end{tabular}
\end{center}
\caption{faithful non-principal $7$-blocks with non-cyclic abelian defect groups}
\end{table}

In case $I(B)\cong 2.S_4^-\times Z_3$ we have $\lvert\mathcal{R}\rvert=1$ and the Schur multiplier of $I(B)$ is trivial. Hence, there is not much to do in order to establish the isotypy. For the other inertial quotient we will only handle one example case.

Let $B$ be the fourth $7$-block of $G=BM$. Here, $k(B)=27$, $l(B)=21$ and 
$I(B)\cong\SL(2,3)\times Z_3$. The Schur multiplier of $I(B)$ is $Z_3$. If the $2$-cocycle of $b$ is non-trivial, one gets $l(B)=5$ which is not the case. Therefore, $b$ is Morita equivalent to the group algebra of $D\rtimes I(B)$, and the irreducible characters of $b$ are all $p$-rational. 
The action of $I(B)$ gives $\mathcal{R}=\{u,v\}$ with $l(b_u)=l(b_v)=3$. Moreover,
\begin{align*}
Q_1&=\begin{pmatrix}
1&.&.&.&.&.&.&.&.&.&.&.&.&.&.&.&.&.&.&.&.\\
.&1&.&.&.&.&.&.&.&.&.&.&.&.&.&.&.&.&.&.&.\\
.&.&1&.&.&.&.&.&.&.&.&.&.&.&.&.&.&.&.&.&.\\
.&.&.&1&.&.&.&.&.&.&.&.&.&.&.&.&.&.&.&.&.\\
.&.&.&.&1&.&.&.&.&.&.&.&.&.&.&.&.&.&.&.&.\\
.&.&.&.&.&1&.&.&.&.&.&.&.&.&.&.&.&.&.&.&.\\
.&.&.&.&.&.&1&.&.&.&.&.&.&.&.&.&.&.&.&.&.\\
.&.&.&.&.&.&.&1&.&.&.&.&.&.&.&.&.&.&.&.&.\\
.&.&.&.&.&.&.&.&1&.&.&.&.&.&.&.&.&.&.&.&.\\
.&.&.&.&.&.&.&.&.&1&.&.&.&.&.&.&.&.&.&.&.\\
.&.&.&.&.&.&.&.&.&.&1&.&.&.&.&.&.&.&.&.&.\\
.&.&.&.&.&.&.&.&.&.&.&1&.&.&.&.&.&.&.&.&.\\
.&.&.&.&.&.&.&.&.&.&.&.&1&.&.&.&.&.&.&.&.\\
.&.&.&.&.&.&.&.&.&.&.&.&.&1&.&.&.&.&.&.&.\\
.&.&.&.&.&.&.&.&.&.&.&.&.&.&1&.&.&.&.&.&.\\
.&.&.&.&.&.&.&.&.&.&.&.&.&.&.&1&.&.&.&.&.\\
.&.&.&.&.&.&.&.&.&.&.&.&.&.&.&.&1&.&.&.&.\\
.&.&.&.&.&.&.&.&.&.&.&.&.&.&.&.&.&1&.&.&.\\
.&.&.&.&.&.&.&.&.&.&.&.&.&.&.&.&.&.&1&.&.\\
.&.&.&.&.&.&.&.&.&.&.&.&.&.&.&.&.&.&.&1&.\\
.&.&.&.&.&.&.&.&.&.&.&.&.&.&.&.&.&.&.&.&1\\
1&-1&1&-2&.&1&-1&1&.&.&-1&-1&1&-1&.&-1&1&1&1&.&.\\
1&-1&.&-1&.&.&.&1&.&-1&.&-1&1&.&.&.&.&1&.&1&.\\
.&.&.&.&-1&.&.&.&.&.&1&1&-1&.&1&.&.&.&.&.&.\\
1&.&.&.&1&.&1&1&.&-1&.&-1&1&.&-1&.&-1&.&.&1&1\\
1&-1&1&-1&.&1&-1&1&1&.&.&-1&.&-1&.&.&.&.&1&.&1\\
2&-1&1&-2&.&2&-1&1&1&.&-1&-2&1&-1&-1&-1&.&1&1&1&1
\end{pmatrix},\\
49M_1&=\left(
\begin{smallmatrix}
39&3&-2&6&2&-6&-1&-6&-3&4&-2&5&-2&2&2&3&4&-4&-2&-8&-6&-1&1&5&3&-1&4\\
3&39&2&-6&-2&-1&-6&6&3&-4&2&-5&2&-2&5&4&3&4&2&1&-1&1&-8&2&4&-6&3\\
-2&2&43&4&-1&-4&4&-4&-2&-2&1&1&1&6&-1&2&-2&2&-6&4&-4&4&-4&1&2&4&-2\\
6&-6&4&37&-4&5&-5&5&-1&-1&-3&-3&4&-4&3&-6&6&8&4&2&-2&-5&-2&-3&1&2&-1\\
2&-2&-1&-4&36&4&-4&-3&2&2&6&6&-8&1&8&-2&2&5&-1&3&-3&3&-3&-8&5&3&-5\\
-6&-1&-4&5&4&37&5&2&-6&-6&3&3&3&4&4&6&1&-1&-4&-2&-5&-2&-5&3&-1&-2&8\\
-1&-6&4&-5&-4&5&37&-2&6&6&-3&-3&-3&-4&3&1&6&1&4&-5&-2&2&-2&4&8&-5&-1\\
-6&6&-4&5&-3&2&-2&37&1&8&-4&3&-4&4&-3&-1&1&-1&-4&-2&-5&5&2&3&6&5&-6\\
-3&3&-2&-1&2&-6&6&1&39&-3&-2&5&5&2&2&-4&4&3&-2&-1&-6&-8&1&-2&-4&6&4\\
4&-4&-2&-1&2&-6&6&8&-3&39&5&-2&5&2&2&3&-3&3&-2&6&1&-1&-6&-2&-4&-1&4\\
-2&2&1&-3&6&3&-3&-4&-2&5&36&-6&8&-1&-8&-5&5&2&1&-3&-4&-3&3&8&2&4&-2\\
5&-5&1&-3&6&3&-3&3&5&-2&-6&36&8&-1&-8&2&-2&2&1&4&3&4&-4&8&2&-3&-2\\
-2&2&1&4&-8&3&-3&-4&5&5&8&8&36&-1&6&2&-2&-5&1&-3&3&4&3&-6&2&-3&-2\\
2&-2&6&-4&1&4&-4&4&2&2&-1&-1&-1&43&1&-2&2&-2&6&-4&4&-4&4&-1&-2&-4&2\\
2&5&-1&3&8&4&3&-3&2&2&-8&-8&6&1&36&-2&-5&-2&-1&3&4&3&4&6&-2&3&-5\\
3&4&2&-6&-2&6&1&-1&-4&3&-5&2&2&-2&-2&39&3&4&2&1&-1&-6&6&-5&-3&8&-4\\
4&3&-2&6&2&1&6&1&4&-3&5&-2&-2&2&-5&3&39&-4&-2&6&8&6&1&-2&-4&-1&-3\\
-4&4&2&8&5&-1&1&-1&3&3&2&2&-5&-2&-2&4&-4&39&2&-6&6&1&6&2&-3&-6&3\\
-2&2&-6&4&-1&-4&4&-4&-2&-2&1&1&1&6&-1&2&-2&2&43&4&-4&4&-4&1&2&4&-2\\
-8&1&4&2&3&-2&-5&-2&-1&6&-3&4&-3&-4&3&1&6&-6&4&37&-2&-5&5&4&1&-5&6\\
-6&-1&-4&-2&-3&-5&-2&-5&-6&1&-4&3&3&4&4&-1&8&6&-4&-2&37&-2&-5&3&6&5&1\\
-1&1&4&-5&3&-2&2&5&-8&-1&-3&4&4&-4&3&-6&6&1&4&-5&-2&37&5&-3&-6&2&6\\
1&-8&-4&-2&-3&-5&-2&2&1&-6&3&-4&3&4&4&6&1&6&-4&5&-5&5&37&3&6&-2&1\\
5&2&1&-3&-8&3&4&3&-2&-2&8&8&-6&-1&6&-5&-2&2&1&4&3&-3&3&36&-5&4&-2\\
3&4&2&1&5&-1&8&6&-4&-4&2&2&2&-2&-2&-3&-4&-3&2&1&6&-6&6&-5&39&1&3\\
-1&-6&4&2&3&-2&-5&5&6&-1&4&-3&-3&-4&3&8&-1&-6&4&-5&5&2&-2&4&1&37&6\\
4&3&-2&-1&-5&8&-1&-6&4&4&-2&-2&-2&2&-5&-4&-3&3&-2&6&1&6&1&-2&3&6&39
\end{smallmatrix}\right).
\end{align*}
It follows that $49m_{\chi\chi}^1\ge 36$. Hence, $49m_{\chi\chi}^u\le 11$. Plesken's algorithm finds six possibilities for $Q_u$:
\begin{align*}
\begin{pmatrix}
2&1&1\\
1&2&1\\
1&1&2\\
1&1&.\\
1&1&.\\
1&1&.\\
1&.&1\\
1&.&1\\
1&.&1\\
.&1&1\\
.&1&1\\
.&1&1\\
1&.&.\\
1&.&.\\
1&.&.\\
.&1&.\\
.&1&.\\
.&1&.\\
.&.&1\\
.&.&1\\
.&.&1\\
1&1&1\\
1&1&1\\
1&1&1\\
1&1&1\\
1&1&1\\
1&1&1
\end{pmatrix},&&
\begin{pmatrix}
2&1&1\\
2&1&1\\
1&2&1\\
1&1&.\\
1&1&.\\
1&.&1\\
1&.&1\\
1&.&1\\
.&1&1\\
.&1&1\\
.&1&1\\
.&1&1\\
1&.&.\\
.&1&.\\
.&1&.\\
.&1&.\\
.&.&1\\
.&.&1\\
.&.&1\\
.&.&1\\
.&.&1\\
1&1&1\\
1&1&1\\
1&1&1\\
1&1&1\\
1&1&1\\
1&1&1
\end{pmatrix},&&
\begin{pmatrix}
2&1&1\\
2&1&1\\
2&1&1\\
1&2&1\\
1&.&1\\
.&1&1\\
.&1&1\\
.&1&1\\
1&.&.\\
.&1&.\\
.&1&.\\
.&1&.\\
.&1&.\\
.&1&.\\
.&.&1\\
.&.&1\\
.&.&1\\
.&.&1\\
.&.&1\\
.&.&1\\
.&.&1\\
1&1&1\\
1&1&1\\
1&1&1\\
1&1&1\\
1&1&1\\
1&1&1
\end{pmatrix},&&
\begin{pmatrix}
2&1&1\\
2&1&1\\
1&2&1\\
1&2&1\\
1&.&1\\
1&.&1\\
.&1&1\\
.&1&1\\
1&.&.\\
1&.&.\\
1&.&.\\
.&1&.\\
.&1&.\\
.&1&.\\
.&.&1\\
.&.&1\\
.&.&1\\
.&.&1\\
.&.&1\\
.&.&1\\
.&.&1\\
1&1&1\\
1&1&1\\
1&1&1\\
1&1&1\\
1&1&1\\
1&1&1
\end{pmatrix},&&
\begin{pmatrix}
2&1&1\\
2&1&1\\
1&2&1\\
1&1&2\\
1&1&.\\
1&.&1\\
.&1&1\\
.&1&1\\
1&.&.\\
1&.&.\\
1&.&.\\
.&1&.\\
.&1&.\\
.&1&.\\
.&1&.\\
.&1&.\\
.&.&1\\
.&.&1\\
.&.&1\\
.&.&1\\
.&.&1\\
1&1&1\\
1&1&1\\
1&1&1\\
1&1&1\\
1&1&1\\
1&1&1
\end{pmatrix},&&
\begin{pmatrix}
2&1&1\\
1&2&1\\
1&1&.\\
1&1&.\\
1&1&.\\
1&1&.\\
1&.&1\\
1&.&1\\
1&.&1\\
1&.&1\\
1&.&1\\
.&1&1\\
.&1&1\\
.&1&1\\
.&1&1\\
.&1&1\\
1&.&.\\
.&1&.\\
.&.&1\\
.&.&1\\
.&.&1\\
1&1&1\\
1&1&1\\
1&1&1\\
1&1&1\\
1&1&1\\
1&1&1
\end{pmatrix}.
\end{align*}
We will show that only the first matrix occurs for $Q_u$ and $Q_v$. By the $*$-construction, we have $49M_u\equiv 49M_v\pmod{7}$ and similarly $3\cdot 49M_1\equiv 49M_u\pmod{7}$. This shows that only the two first possibilities can occur. Moreover,
\begin{align*}
49m_{\chi\chi}^1&=37\Longleftrightarrow 49m_{\chi\chi}^u=6\Longleftrightarrow 49m_{\chi\chi}^v=6,\\
49m_{\chi\chi}^1&=39\Longleftrightarrow 49m_{\chi\chi}^u=5\Longleftrightarrow 49m_{\chi\chi}^v=5,\\
49m_{\chi\chi}^1&=43\Longrightarrow 49m_{\chi\chi}^u=3,\\
49m_{\chi\chi}^1&=36\Longleftrightarrow\{49m_{\chi\chi}^u,49m_{\chi\chi}^v\}=\{3,10\}.
\end{align*}
We consider the submatrix of $49M_1$ corresponding to the characters $\chi\in\Irr(B)$ such that $49m_{\chi\chi}^1=36$:
\[\begin{pmatrix}
36&6&6&-8&8&-8\\
6&36&-6&8&-8&8\\
6&-6&36&8&-8&8\\
-8&8&8&36&6&-6\\
8&-8&-8&6&36&6\\
-8&8&8&-6&6&36
\end{pmatrix}\]
By way of contradiction, suppose that the second choice above occurs for $M_u$. Then there are distinct characters $\chi,\psi\in\Irr(B)$ such that $49m_{\chi\chi}^1=49m_{\chi\chi}^1=36$, $49m_{\chi\psi}^u=\pm10$ and $49m_{\chi\psi}^v=\pm3$. Hence, $49m_{\chi\psi}^u+49m_{\chi\psi}^v=\pm13$. This gives the contradiction $m_{\chi\psi}^1+m_{\chi\psi}^u+m_{\chi\psi}^v\ne 0$. Therefore, the first matrix above occurs for $Q_u$ and $Q_v$. 

We still need to work in order to show the uniqueness if $Q_1$ is given as above. By permuting the elements of a basic set for $b_u$, we may assume that the first row of $Q_u$ is $(1,0,0)$ (here $49m_{1,1}^1=39$). Since $49m_{2,2}^1=39$ and $49m_{1,2}^1=3$, we have $\{49m_{1,2}^u,49m_{1,2}^v\}=\{-5,2\}$. Thus, after interchanging $u$ and $v$ is necessary, we may assume that the second row of $Q_u$ is $(-1,0,0)$. Similarly, the tenth row of $Q_u$ can be chosen to be $(0,1,0)$. This already determines all the remaining entries of $Q_u$. We get:
\begin{align*}
Q_u=\begin{pmatrix}
1&.&.\\
-1&.&.\\
1&1&1\\
-1&-1&.\\
1&1&2\\
1&1&.\\
-1&-1&.\\
1&.&1\\
1&.&.\\
.&1&.\\
-1&-2&-1\\
-2&-1&-1\\
1&1&1\\
-1&-1&-1\\
-1&-1&-1\\
.&-1&.\\
.&1&.\\
.&.&-1\\
1&1&1\\
.&-1&-1\\
1&.&1\\
.&-1&-1\\
.&1&1\\
1&1&1\\
.&.&-1\\
-1&.&-1\\
.&.&1
\end{pmatrix},&&
Q_v=\begin{pmatrix}
1&.&.\\
.&-1&.\\
1&1&1\\
-1&-1&.\\
-1&-1&-1\\
1&.&1\\
.&-1&-1\\
1&1&.\\
.&.&1\\
.&.&1\\
1&1&1\\
1&1&1\\
-1&-1&-2\\
-1&-1&-1\\
1&2&1\\
-1&.&.\\
.&1&.\\
.&.&-1\\
1&1&1\\
.&-1&-1\\
1&.&1\\
-1&-1&.\\
1&.&1\\
-2&-1&-1\\
-1&.&.\\
.&-1&-1\\
.&1&.
\end{pmatrix}.
\end{align*}
This completes the proof of Theorem~\ref{main}.

\section{Concluding remarks}
The implementation of Plesken's algorithm in GAP is given by the function \texttt{OrthogonalEmbeddings}. There has been a serious bug in this function which was eventually fixed in version 4.7.6 by Thomas Breuer.

Our method does not only establish isotypies between $B$ and its Brauer correspondent $b$, but also between different blocks of sporadic groups with the same inertial quotient $I(B)$.

For some of the smaller sporadic groups one can avoid Plesken's algorithm with the following alternative approach: For $u\in\mathcal{R}$ determine the possible class fusions from $\C_G(u)$ to $G$. 
Then compute the ordinary decomposition matrix up to basic sets for every Brauer correspondent of $B$ in $\C_G(u)$ (see Step~2). By restricting the irreducible characters of $B$ to $\C_G(u)$, one obtains $Q_u$ up to basic sets.
In turns out that $Q_u$ is usually not affected by the choice of the class fusion. However, this procedure has to be done for every group individually. 

\section*{Acknowledgment}
This work is supported by the German Research Foundation and the Daimler and Benz Foundation. The author thanks Jianbei An, Jürgen Müller and Raphaël Rouquier for answering some questions.

\end{document}